\UseRawInputEncoding
\documentclass[leqno]{amsart}

\usepackage{stmaryrd,graphicx}
\usepackage{amssymb,mathrsfs,amsmath,amscd,amsthm,color}
\usepackage{float}
\usepackage{mathabx}
\usepackage[all,cmtip]{xy}
\DeclareMathAlphabet{\mathpzc}{OT1}{pzc}{m}{it}
\usepackage{amsfonts,latexsym,wasysym}
%\usepackage[bookmarks=true,
%              bookmarksnumbered=true, breaklinks=true,
%              pdfstartview=FitH, hyperfigures=false,
%              plainpages=false, naturalnames=true,
%              colorlinks=true,pagebackref=true,
%              pdfpagelabels]{hyperref}

\newcommand{\wt}{\widetilde}

\newcommand{\wh}{\widehat}

\newcommand{\fq}{F_{q}}
\newcommand{\stm}{M_{st}}
\newcommand{\tm}{M_{t}}
\newcommand{\fm}{F_{M}}

\newcommand{\bbn}{\mathbb{N}}
\newcommand{\bbq}{\mathbb{Q}}
\newcommand{\bbr}{\mathbb{R}}

\newcommand{\spaces}{\mathbf{Top}}

\newtheorem{theorem}{Theorem}[section]
\newtheorem{lemma}[theorem]{Lemma}
\newtheorem{proposition}[theorem]{Proposition}
\newtheorem{corollary}[theorem]{Corollary}
\theoremstyle{definition}\newtheorem{definition}[theorem]{Definition}
\newtheorem{example}[theorem]{Example}

\newtheorem{remark}[theorem]{Remark}
\newtheorem{problem}[theorem]{Problem}

\begin{document}
\title{Free quasitopological groups}

\author[J. Brazas]{Jeremy Brazas}
\address{West Chester University\\ Department of Mathematics\\
West Chester, PA 19383, USA}
\email{jbrazas@wcupa.edu}

\author[S. Emery]{Sarah Emery}
\address{West Chester University\\ Department of Mathematics\\
West Chester, PA 19383, USA}
\email{se851997@wcupa.edu}

\subjclass[2010]{22A05,54B15,54E30,03C05}
\keywords{free quasitopological group, free semitopological monoid, quasitopological group, cross topology}

\date{\today}

\begin{abstract}
In this paper, we study the topological structure of a universal construction related to quasitopological groups: the free quasitopological group $F_q(X)$ on a space $X$. We show that free quasitopological groups may be constructed directly as quotient spaces of free semitopological monoids, which are themselves constructed by iterating product spaces equipped with the ``cross topology." Using this explicit description of $F_q(X)$, we show that for any $T_1$ space $X$, $F_q(X)$ is the direct limit of closed subspaces $F_q(X)_n$ of words of length at most $n$. We also prove that the natural map ${\bf i_n}:\coprod_{i=0}^{n}(X\sqcup X^{-1})^{\otimes i}\to F_q(X)_n$ is quotient for all $n\geq 0$. Equipped with this convenient characterization of the topology of free quasitopological groups, we show, among other things, that a subspace $Y\subseteq X$ is closed if and only if the inclusion $Y\to X$ induces a closed embedding $F_q(Y)\to F_q(X)$ of free quasitopological groups.
\end{abstract}

\maketitle

\section{Introduction}

A \textit{quasitopological group} is a group $G$ with a topology such that inverse $g\mapsto g^{-1}$ is continuous and such that the group operation $G\times G\to G$ is continuous in each variable. The second condition is equivalent to the translations $g\mapsto gh$ and $g\mapsto hg$ being homeomorphisms for every $h\in G$. Certainly, every topological group is a quasitopological group. A famous theorem of R. Ellis \cite{ellis} (see also \cite[Theorem 2.3.12]{AT08}) states that every locally compact Hausdorff quasitopological group is a topological group. However, there are many important quasitopological groups, which are not topological groups, including homeomorphism groups $Homeo(X)$ and certain topologized homotopy groups \cite{brazfabelqtop}.

In this paper, we study universal quasitopological groups, which are, in a sense, as far from being a topological group as possible, namely ``free quasitopological groups." The free quasitopological group on a space $X$ is a quasitopological group $F_q(X)$ equipped with a map $\sigma: X\to F_q(X)$, universal in the sense that for every continuous map $f:X\to G$ to a quasitopological group $G$ there is a unique continuous homomorphism $\wt{f}:\fq(X)\to G$ such that $\wt{f}\circ \sigma =f$. In other words, $F_q:\spaces\to \mathbf{qTopGrp}$ is a functor left adjoint to the functor $\mathbf{qTopGrp}\to \spaces$ which forgets the group structure of a quasitopological group.

The free quasitopological groups we investigate are the quasitopological analogues of free topological groups $\fm(X)$ in the sense of Markov \cite{Markov}. Free topological groups hold a place of particular importance in general topological group theory and have an extensive literature. This literature focuses on the case where $X$ is Tychonoff since this is precisely when $\fm(X)$ is Hausdorff and the inclusion of generators $\sigma_1:X\to \fm(X)$ is an embedding. Unfortunately, characterizations of the actual structure of free topological groups, e.g. those in \cite{Sipacheva}, are quite complicated when $X$ lacks certain compactness properties. If $X$ is an inductive limit of a nested sequence of compact Hausdorff subspaces, i.e. a $k_{\omega}$-space, then $\fm(X)$ may be described conveniently is the inductive limit of the subspaces $\fm(X)_n$ of words of length at most $n$ and the natural functions ${\bf i_n}:\coprod_{i=0}^{n}(X\sqcup X^{-1})^n\to \fm(X)_n$ are quotient maps \cite{MMO} (see also \cite{Tkachenko}). In general, both of these properties holding is equivalent to $\fm(X)$ being the natural quotient of the free topological monoid $\tm(X\sqcup X^{-1})=\coprod_{n=0}^{\infty}(X\sqcup X^{-1})^n$ with respect to word reduction \cite[Statement 5.1]{Sipacheva}. Unfortunately, this characterization of $\fm(X)$ as a quotient space will often fail to hold (e.g. if $X=\bbq$ \cite{FOT}) without imposing some kind of global or local compactness condition on $X$. There is a substantial literature on determining when the maps ${\bf i_n}$ are quotient, which is nicely surveyed in \cite[Section 6]{Sipacheva}. There are also analogous investigations for free paratopological groups \cite{EN1,EN2,PRpara}, which were first introduced in \cite{RSTpara}.

In this paper, we show that the free quasitopological group $\fq(X)$ may always be constructed as the natural quotient of the free semitopological monoid $\stm(X\sqcup X^{-1})$ with respect to word reduction. To construct $\stm(X\sqcup X^{-1})$, and consequently $\fq(X)$, without appealing to adjoint functor theorems (as is often done for free topological groups \cite{Po91}), we employ the \textit{cross product} $X\otimes Y$ of spaces. In Section \ref{sectioncrosstopology}, we recall the \textit{cross topology} \cite[p.15]{AT08} used to define $X\otimes Y$. The cross topology is generally much finer than the product topology and is useful for characterizing separate continuity \cite{HW}. In Section \ref{sectionfreesemitopologicalmonoid}, we construct the free semitopological monoid $\stm(X)$ on a space $X$ as the coproduct $\coprod_{i=0}^{n}X^{\otimes i}$ of iterated cross products of $X$ with itself. In Section \ref{sectionfreeqtopgroups}, we show that if we give the free group $F(X)$ the quotient topology with respect to the natural reduction map $R:\stm(X\sqcup X^{-1})\to F(X)$, then the result is precisely a free quasitopological group $\fq(X)$.

Equipped with this construction of $\fq(X)$, Section \ref{sectiontopologyoffqgs} is devoted to proving a general and practical characterization of the topology of $\fq(X)$. Analogous to the free topological group situation, we let $\fq(X)_n$ denote the subspace of $\fq(X)$ consisting of words of length at most $n$ and ${\bf i_n}:\coprod_{i=0}^{n}(X\sqcup X^{-1})^{\otimes i}\to \fq(X)_n$ be the restriction of $R$. Our main result is the following.

\begin{theorem}\label{mainthm}
If $X$ is a $T_1$ space, then
\begin{enumerate}
\item The canonical injection $\sigma_n:X^{\otimes n}\to \fq(X)$ is a closed embedding,
\item $\fq(X)$ has the weak topology with respect to the subspaces $\{\fq(X)_n\}_{n\in\bbn}$,
\item the canonical map ${\bf i_n}:\coprod_{i=0}^{n}(X\sqcup X^{-1})^{\otimes i}\to \fq(X)_n$ is quotient for all $n\in\bbn$.
\end{enumerate}
\end{theorem}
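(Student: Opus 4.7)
My strategy is to establish (3) first, from which (2) is a formal consequence and (1) follows with an additional closedness argument. For (3), the map ${\bf i_n}$ is the restriction of the quotient map $R:\stm(X\sqcup X^{-1})\to \fq(X)$ to the first $n+1$ coproduct summands, so continuity and surjectivity onto $\fq(X)_n$ are immediate. Given $U\subseteq \fq(X)_n$ with ${\bf i_n}^{-1}(U)$ open in the coproduct, my plan is to exhibit an open $V\subseteq \fq(X)$ with $V\cap \fq(X)_n = U$; the candidate is $V := U\cup (\fq(X)\setminus \fq(X)_n)$. The intersection condition $V\cap \fq(X)_n = U$ is then automatic, so everything reduces to verifying that $V$ is open, i.e., that $R^{-1}(V)\cap (X\sqcup X^{-1})^{\otimes i}$ is open in the cross topology for every $i\geq 0$. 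For $i\leq n$ this is immediate from the hypothesis, since length-$i$ words cannot reduce to length exceeding $n$; the nontrivial case is $i>n$.

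For $i>n$, $R^{-1}(V)\cap (X\sqcup X^{-1})^{\otimes i}$ consists of the length-$i$ words whose reduction either lies in $U$ or has length strictly greater than $n$. To verify openness at a point $w=(y_1,\ldots,y_i)$, I construct a cross-topology star-neighborhood $\bigcup_j \{y_1\}\times\cdots\times U_j\times\cdots\times\{y_i\}$ with open sets $U_j\ni y_j$ in $X\sqcup X^{-1}$ to be shrunk appropriately. The $T_1$ hypothesis on $X$ lets me exclude from each $U_j$ any finite list of bad letter values, in particular the inverses of the adjacent letters $y_{j-1}$ and $y_{j+1}$, which would produce unwanted new cancellations. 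The analysis splits according to whether $y_j$ survives the cancellation cascade producing $R(w)$: if it survives, perturbing $y_j$ exchanges a single letter of $R(w)$, and the openness of ${\bf i_n}^{-1}(U)$ at the length-$|R(w)|$ representative of $R(w)$ in $(X\sqcup X^{-1})^{\otimes|R(w)|}$ gives the cross-topology slack needed for the perturbed reduction to stay in $U$ (when $R(w)\in U$) or in $\fq(X)\setminus \fq(X)_n$ (when $|R(w)|>n$). If $y_j$ is cancelled, perturbing it breaks at least one cancellation and the new reduction has length at least $|R(w)|+2$; when this new length exceeds $n$, membership in $V$ is automatic, and otherwise I apply openness of ${\bf i_n}^{-1}(U)$ at the new length to shrink $U_j$ appropriately.

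The main obstacle is the bookkeeping for words with nested cancellations, where breaking one pair may unlock or suppress further cancellations; this I handle by induction on the number of cancellations in $w$, invoking openness of ${\bf i_n}^{-1}(U)$ at each intermediate length that arises. Once (3) is in hand, (2) is immediate: a subset $W\subseteq \fq(X)$ is open iff $R^{-1}(W)\cap (X\sqcup X^{-1})^{\otimes i}$ is open for each $i$, and by (3) this is equivalent to $W\cap \fq(X)_n$ being open in the subspace topology for every $n$. For (1), $\sigma_n$ factors as the inclusion $X^{\otimes n}\hookrightarrow \coprod_{i\leq n}(X\sqcup X^{-1})^{\otimes i}$ followed by ${\bf i_n}$, and $X^{\otimes n}$ is both open in this coproduct and \emph{saturated} with respect to ${\bf i_n}$, since any word of length $\leq n$ reducing to a length-$n$ all-positive reduced word must itself be that word. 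Quotient maps restrict to quotient maps on open saturated sets, so $\sigma_n$ is a quotient bijection onto its image, hence a homeomorphism. Closedness of the image in $\fq(X)$ follows from a perturbation analysis in the spirit of (3), using $T_1$ to separate positive from negative coordinate values.
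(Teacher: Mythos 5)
Your proposal is correct and rests on the same essential mechanism as the paper's proof: use the $T_1$ hypothesis to delete the finitely many letters of $wv$ (and their inverses) from a neighborhood of $x$ in a projection fiber $F_{w,v}$, so that a perturbed letter $z\neq x$ creates no new cancellations, whence $R(wz^{\epsilon}v)=R(w)z^{\epsilon}R(v)$ either has length exceeding $n$ or lies in a fiber of index at most $n$ where the hypothesis on ${\bf i}_{\bf n}^{-1}(U)$ applies; and, for (1), the observation that $X^{\otimes n}$ is a clopen, ${\bf i_n}$-saturated summand. The only genuine difference is organizational: the paper first proves that $\fq(X)_n$ is closed (Lemma \ref{fnclosedlemma}), deduces (2) from that, and then proves (3) by induction on $|w|+|v|$ (Lemma \ref{restrictedquotientlemma}); you prove (3) first by showing $U\cup(\fq(X)\setminus \fq(X)_n)$ is open --- which subsumes the closedness of $\fq(X)_n$ as the case $U=\emptyset$ --- and then obtain (2) formally from (3). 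The ``nested cancellation'' bookkeeping you flag as the main obstacle largely dissolves once one notes that $R(wz^{\epsilon}v)=R(R(w)z^{\epsilon}R(v))$ for every $z$, so the analysis in an arbitrary fiber reduces in one step to the reduced fiber $F_{R(w),R(v)}$; the paper reaches the same point via its induction, and your proposed induction on the number of cancellations would work but is not needed. One small correction for (1): the closedness of $Im(\sigma_n)$ requires no separate perturbation argument and no positive/negative separation via $T_1$ --- by saturation ${\bf i}_{\bf n}^{-1}(Im(\sigma_n))$ is exactly the clopen summand $X^{\otimes n}$, so $Im(\sigma_n)$ is closed in $\fq(X)_n$, which is itself closed in $\fq(X)$ by what you have already established.
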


Since cross-products only preserve compactness in trivial situations, the well-known use of the Stone-\v{C}ech compactification $\beta X$, c.f. \cite{HardyMorrisThompson}, appears to be unhelpful for studying $\fq(X)$. Hence, there appears to be a trade-off. While many techniques used to study free topological groups are no longer helpful, Theorem \ref{mainthm}, whose topological-group analogue rarely holds for $\fm(X)$ appears to provide a direct avenue for answering most questions about $\fq(X)$. For example, as part of Theorem \ref{discretetheorem}, we prove the following four conditions are equivalent: $\fq(X)$ is a topological group, $\fq(X)$ is first countable, $\fq(X)$ is discrete, and $X$ is discrete. Hence, either $\fq(X)$ is a discrete group or it is not a topological group. 

Given a Tychonoff space $X$ and a subspace $Y\subseteq X$, it is a fundamental problem to determine when the inclusion $Y\to X$ induces an embedding of free topological groups $\fm(Y)\to \fm(X)$. This problem has been studied extensively \cite{HardyMorrisThompson,Numela,Samuelultra,Uspenskii} and a full characterization was given by O. Sipacheva \cite{Sipachevasubgroups} in terms of extensions of continuous pseudometrics. In Section \ref{sectionsubspaces}, we exploit Theorem \ref{mainthm} to prove the following.

\begin{theorem}\label{mainthm2}
Let $X$ be a $T_1$ space and $Y\subseteq X$. The inclusion $Y\to X$ induces a closed embedding $\fq(Y)\to \fq(X)$ of free quasitopological groups if and only if $Y$ is closed in $X$.
\end{theorem}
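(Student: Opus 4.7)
\emph{Necessity.} By Theorem \ref{mainthm}(1), the canonical map $\sigma_1:X\to \fq(X)$ is a closed embedding, so identify $X$ with its image in $\fq(X)$. The image of the induced homomorphism $f:\fq(Y)\to \fq(X)$ is the algebraic subgroup $F(Y)\leq F(X)$, and clearly $X\cap F(Y)=Y$. Thus, if $f(\fq(Y))$ is closed in $\fq(X)$, then $Y=X\cap f(\fq(Y))$ is closed in $X$.

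\emph{Sufficiency: two reductions.} Assume $Y$ is closed in $X$. The continuous homomorphism $f:\fq(Y)\to \fq(X)$ is injective. To show $f$ is a closed embedding I plan two reductions via Theorem \ref{mainthm}. Because $A\cap \fq(X)_n=A\cap \fq(Y)_n$ for every $A\subseteq \fq(Y)$, the weak topologies from Theorem \ref{mainthm}(2) reduce the claim to showing that each restriction $f_n:\fq(Y)_n\to \fq(X)_n$ is a closed embedding. Since ${\bf i_n^X}$ and ${\bf i_n^Y}$ are quotient by Theorem \ref{mainthm}(3), both the closed-image statement and the embedding statement for $f_n$ will in turn follow once I show: \emph{for every closed $B\subseteq \fq(Y)_n$, the set $({\bf i_n^X})^{-1}(B)$ is closed in $\coprod_{i=0}^{n}(X\sqcup X^{-1})^{\otimes i}$}. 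Taking $B=\fq(Y)_n$ yields closedness of $\fq(Y)_n$ in $\fq(X)_n$, and taking arbitrary closed $B$ upgrades $f_n$ to an embedding.

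\emph{The decomposition.} For each valid non-crossing matching $M$ on a subset $S\subseteq \{1,\ldots,i\}$---i.e., each pattern of cancellations in which the unmatched indices lie outside every matched pair---list the unmatched indices as $\ell_1<\cdots<\ell_r$ and set
\[
D_M^B=\left\{(x_1,\ldots,x_i) : x_k=x_j^{-1}\text{ for all }(j,k)\in M,\ (x_{\ell_1},\ldots,x_{\ell_r})\in ({\bf i_n^Y})^{-1}(B)\cap(Y\sqcup Y^{-1})^{\otimes r}\right\}.
\]
Confluence of free group reduction yields the finite union $({\bf i_n^X})^{-1}(B)\cap (X\sqcup X^{-1})^{\otimes i}=\bigcup_M D_M^B$. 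Each equation $x_k=x_j^{-1}$ defines a closed set (every one-variable slice is a singleton, closed because $X$ is $T_1$), and the condition on the unmatched letters is the preimage, under a coordinate projection to $(X\sqcup X^{-1})^{\otimes r}$, of a set which is closed in $(Y\sqcup Y^{-1})^{\otimes r}$ and therefore closed in $(X\sqcup X^{-1})^{\otimes r}$ because $(Y\sqcup Y^{-1})^{\otimes r}$ is a closed subspace. Intersecting finitely many closed conditions gives $D_M^B$ closed.

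\emph{Main obstacle.} The principal technical work lies in pinning down three facts about the cross topology: (a) a subset of an iterated cross product is closed iff every one-variable slice is closed; (b) the coordinate projection $(X\sqcup X^{-1})^{\otimes i}\to (X\sqcup X^{-1})^{\otimes r}$ onto the unmatched coordinates is continuous; and (c) a closed subspace $Y\subseteq X$ produces a closed subspace $(Y\sqcup Y^{-1})^{\otimes r}\subseteq (X\sqcup X^{-1})^{\otimes r}$ (not merely a closed subset). Each of these should be a clean consequence of the separate-continuity characterization of the cross topology from Sections \ref{sectioncrosstopology}--\ref{sectionfreesemitopologicalmonoid}, but because the cross topology is strictly finer than the product topology, the combined use of (a)--(c) inside the matching decomposition demands careful bookkeeping.
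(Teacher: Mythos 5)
Your argument is correct, but it takes a genuinely different route from the paper's, so a comparison is worth recording. Both proofs reduce, via Theorem \ref{mainthm}(2)--(3), to showing each restriction $f_n:\fq(Y)_n\to\fq(X)_n$ is a closed embedding; from there the paper works fiber by fiber, showing for each pair $w,v$ that around any point $wx^{\epsilon}v$ outside the relevant preimage there is an explicit neighborhood $wU^{\epsilon}v$ missing it, where $U$ is built by deleting the finitely many letters of $wv$ (this is where $T_1$ enters), intersecting with $X\setminus Y$ when $x\notin Y$, and, for the embedding half, pulling back a neighborhood from the fiber $F_{R(w),R(v)}$ over $Y$ (with an induction on $|w|+|v|$ hidden in Lemma \ref{restrictedquotientlemma}). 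You instead decompose the entire preimage $({\bf i_n^X})^{-1}(f_n(B))$ into a finite union of closed pieces $D_M^B$ indexed by cancellation patterns, which treats the closed-image and embedding statements uniformly and avoids constructing neighborhoods by hand, at the cost of your deferred facts (a)--(c). All three do hold and are available from the paper: (a) is the projection-fiber characterization stated just before Lemma \ref{productlemma}; (b) holds because each one-variable restriction of the coordinate projection is either constant or the inclusion of a projection fiber, so the iterated form of Lemma \ref{separatecontlemma} applies; and (c) holds because a set closed in $(Y\sqcup Y^{-1})^{\otimes r}$ has every one-variable slice closed in $Y\sqcup Y^{-1}$ or empty, hence closed in $X\sqcup X^{-1}$ since $Y$ is closed in $X$ --- the only place your sufficiency argument uses the hypothesis, as it should be. Two small points to tighten: the slices of $\{x_k=x_j^{-1}\}$ in variables other than $j$ and $k$ are empty or a whole fiber rather than singletons (still closed, so the conclusion stands, and $T_1$ is genuinely needed for the $j,k$ slices); and in verifying $({\bf i_n^X})^{-1}(f_n(B))\subseteq\bigcup_M D_M^B$ you should take $M$ to be the matching recording an actual reduction to the reduced form, so that the unmatched letters are exactly the letters of $R(x_1\cdots x_i)\in F(Y)$ and therefore automatically lie in $Y\sqcup Y^{-1}$.
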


We also prove a version of Theorem \ref{mainthm2} where ``closed embedding" is weakened to ``embedding." In particular, we show in Theorem \ref{embedding2} that if $X$ is a Hausdorff sequential space and $Y\subseteq X$, then the inclusion $Y\to X$ induces an embedding $\fq(Y)\to \fq(X)$ if and only if $Y$ is closed in $X$.

\section{The cross topology}\label{sectioncrosstopology}

Although it is possible to define a cross topology for infinite product spaces, we will restrict to finite products. Generally, we will write $X\times Y$ or $\prod_{j\in J}X_j$ to denote the direct product of sets or spaces and $\pi_k:\prod_{j\in J}X_j\to X_k$, $k\in J$ will denote the projection map. We will denote the coproduct (topological sum) of a family of spaces by $\coprod_{j\in J}X_j$.

\begin{definition}
Given spaces $X_1$ and $X_2$, the \textit{cross topology} on the direct product $X_1\times X_2$ consists of sets $U\subseteq X_1\times X_2$ such that $\pi_1(U\cap X_1\times \{y\})$ is open in $X_1$ for all $y\in X_2$ and $\pi_2(U\cap \{x\}\times X_2)$ is open in $X_2$ for all $x\in X_1$. We will denote the set-theoretic product $X_1\times X_2$ equipped with the cross topology as $X_1\otimes X_2$ and refer to this space as the \textit{cross-product of $X_1$ and $X_2$}.
\end{definition}

The cross topology was introduced by Nov\'ak in \cite{Novak} and is generally much finer than the product topology. For example, the set $\{(0,0)\}\cup\{(x,y)\in\bbr^2\mid |y|>2|x|\text{ or }|x|>2|y|\}$ is open in $\bbr\otimes \bbr$ but not in $\bbr\times \bbr$ (see Figure \ref{fig}). Based on this example, one can imagine far more intricate open sets and observe why describing an explicit neighborhood basis at $(0,0)$ in the cross topology is challenging.

\begin{figure}[h]
\centering \includegraphics[height=2in]{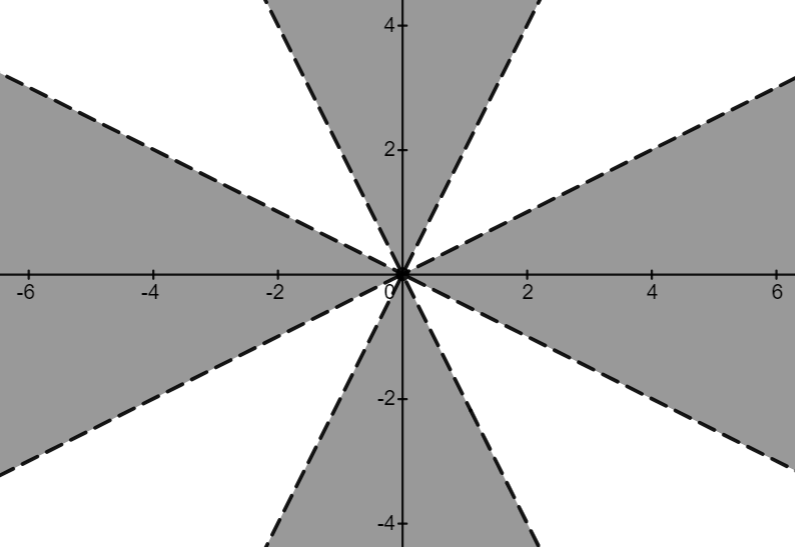}
\caption{\label{fig}An open neighborhood of the origin in $\bbr\otimes\bbr$, which is not open in $\bbr\times \bbr$.}
\end{figure}

\begin{remark}\label{discreteremark}
A space $X$ is $T_1$ if and only if the diagonal $\Delta=\{(x,x)\mid x\in X\}$ is closed in $X\otimes X$. Moreover, if $X$ is $T_1$, then $\Delta$ is a discrete subspace of $X\otimes X$. This follows from the fact that for any $x\in X$, the set $(X\times X\backslash \Delta)\cup \{(x,x)\}$ is open in $X\otimes X$ (see Figure \ref{fig2}). Since the diagonal $\Delta$ is homeomorphic to $X$ as a subspace of $X\times X$, we see that for a $T_1$ space $X$, the cross and product topologies on $X\times X$ agree if and only if $X$ is discrete.

As a consequence, we note that $X\otimes X$ is rarely compact. In fact, $X\otimes X$ is compact Hausdorff if and only if $X$ is discrete and finite. For if $X\otimes X$ is compact Hausdorff, then the identity function $X\otimes X\to X\times X$ is a homeomorphism and the previous paragraph implies that $X$ is discrete. Therefore, $X$ is a compact discrete space and must be finite.
\end{remark}

\begin{figure}[b]
\centering \includegraphics[height=2.2in]{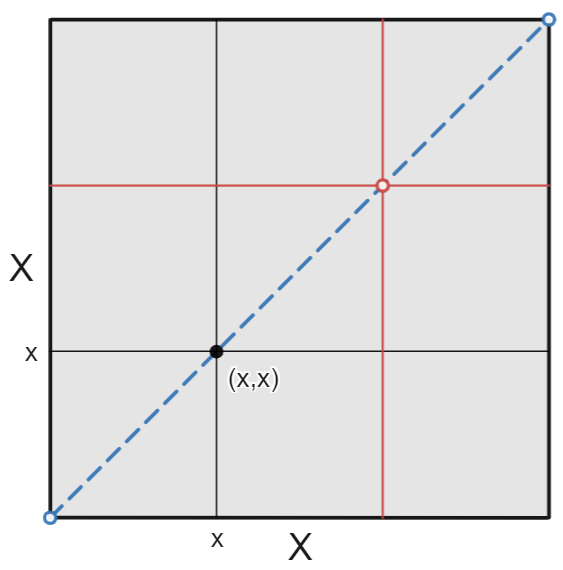}
\caption{\label{fig2} The set $(X\times X\backslash \Delta)\cup \{(x,x)\}$ is open in $X\otimes X$ for all $x\in X$ since each intersection with a projection fiber is open.}
\end{figure}

If we identify the projection fibers $X_1\times \{x_2\}$, $x_2\in X_2$ and $\{x_1\}\times X_2$, $x_1\in X_1$ with the spaces $X_1$ and $X_2$ respectively, then we may view the cross topology as the weak topology with respect to all of these fibers. In other words, the cross topology is the quotient topology with respect to the \textit{fiber decomposition map}
\[\coprod_{x_2\in X_2}X_1\times \{x_2\}\sqcup \coprod_{x_1\in X_1}\{x_1\}\times X_2\to X_1\times X_2,\]
which is the inclusion on each summand.

Recall that a function $f:X_1\times X_2\to Y$ of spaces is \textit{separately continuous} if $f_{x_1}:X_2\to Y$, $f_{x_1}(x_2)=f(x_1,x_2)$ is continuous for all $x_1\in X_1$ and $f_{x_2}:X_1\to Y$, $f_{x_2}(x_1)=f(x_1,x_2)$ is continuous for all $x_2\in X_2$. The primary utility of the cross topology is to very simply characterize separate continuity in terms of ordinary continuity \cite{HW,Novak2}. The following is essentially Proposition 4.1 of \cite{HW}.

\begin{lemma}\label{separatecontlemma}
A function $f:X_1\times X_2\to Y$ is separately continuous if and only if $f:X_1\otimes X_2\to Y$ is continuous. 
\end{lemma}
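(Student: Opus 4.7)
My plan is to leverage the quotient/weak-topology description of the cross topology given in the paragraph immediately preceding the lemma. That description identifies $X_1\otimes X_2$ as the quotient of the disjoint union
\[\coprod_{x_2\in X_2}X_1\times\{x_2\}\,\sqcup\,\coprod_{x_1\in X_1}\{x_1\}\times X_2\]
(with each summand $X_1\times\{x_2\}$ carrying the topology of $X_1$ and each summand $\{x_1\}\times X_2$ carrying the topology of $X_2$) under the fiber decomposition map. By the universal properties of quotient and coproduct topologies, a function out of $X_1\otimes X_2$ is continuous if and only if its restriction to each such fiber is continuous. Since the restrictions of $f$ to these fibers are exactly the partial maps $f_{x_2}:X_1\to Y$ and $f_{x_1}:X_2\to Y$, this gives both implications in a single stroke.

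If I preferred to avoid appealing to the quotient description, I would prove each direction by directly unwinding the defining open-set condition of the cross topology. For the forward direction, given $V\subseteq Y$ open, I would observe that $\pi_1(f^{-1}(V)\cap X_1\times\{y\})=f_y^{-1}(V)$ is open in $X_1$ by continuity of $f_y$, and symmetrically $\pi_2(f^{-1}(V)\cap\{x\}\times X_2)=f_x^{-1}(V)$ is open in $X_2$, so $f^{-1}(V)$ is open in $X_1\otimes X_2$. For the converse, I would first check that each fiber inclusion $\iota_{x_1}:X_2\to X_1\otimes X_2$, $\iota_{x_1}(x_2)=(x_1,x_2)$, is continuous: if $U$ is open in $X_1\otimes X_2$, then $\iota_{x_1}^{-1}(U)=\pi_2(U\cap\{x_1\}\times X_2)$ is open in $X_2$ by the cross topology condition. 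Hence $f_{x_1}=f\circ\iota_{x_1}$ is continuous as a composition of continuous maps, and similarly for $f_{x_2}$.

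I do not foresee a genuine obstacle. The lemma is essentially a direct translation between two equivalent descriptions of the cross topology: its pointwise definition via open-set conditions on projection fibers, and its global characterization as the weak (equivalently quotient) topology with respect to the family of those fibers. The only minor item to record carefully, if the quotient description is to be used as a black box, is that the subspace topology that each fiber inherits from $X_1\otimes X_2$ agrees with the intrinsic topology of $X_1$ or $X_2$; this is immediate from the defining condition.
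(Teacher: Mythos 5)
Your primary argument---passing through the fiber decomposition quotient map and using the universal properties of quotients and coproducts---is exactly the paper's proof of Lemma \ref{separatecontlemma}, and it is correct. The alternative direct unwinding of the open-set conditions you sketch is also valid, but it is not needed; the paper takes the quotient route.
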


\begin{proof}
Let $Q:\coprod_{x_2\in X_2}X_1\times \{x_2\}\sqcup \coprod_{x_1\in X_1}\{x_1\}\times X_2\to X_1\otimes X_2$ be the fiber decomposition quotient map. Notice that $f:X_1\times X_2\to Y$ is separately continuous if and only if $f\circ Q$ is continuous. Since $Q$ is quotient, $f:X_1\times X_2\to Y$ is separately continuous if and only if $f:X_1\otimes X_2\to Z$ is continuous.
\end{proof}

It is clear that there is a natural associativity homeomorphism $X_1\otimes (X_2\otimes X_3)\cong (X_1\otimes X_2)\otimes X_3$ and a homeomorphism $X_1\otimes X_2\cong X_2\otimes X_1$. Hence, $\otimes:\mathbf{Top}^2\to\mathbf{Top}$ defines a symmetric monoidal tensor product on the category of topological spaces and continuous functions. We will write $\otimes_{i=1}^{n}X_i$ to denote an $n$-fold cross-product of a sequence of spaces $X_1,X_2,\dots ,X_n$, that is $\prod_{i=1}^{n}X_i$ viewed as an iterated cross-product. A \textit{projection fiber} of $\otimes_{i=1}^{n}X_i$ is a fiber of one of the projection maps $\Pi_j:\otimes_{i=1}^{n}X_i\to \otimes_{i\neq j}X_i$, that is, of the form $\prod_{i=1}^{n}A_i$ where there exists a single $j\in\{1,2,\dots,n\}$ with $A_j=X_j$ and $A_i$ is a singleton when $j\neq i$. By a straightforward induction, it is clear that a set $C\subseteq \otimes_{i=1}^{n}X_i$ is closed (resp. open) if and only if the intersection of $C$ with each projection fiber of $\otimes_{i=1}^{n}X_i$ is closed (resp. open) in that projection fiber. Equivalently, if $PF(X_1,X_2,\dots,X_n)$ is the disjoint union of all projection fibers of $\otimes_{i=1}^{n}X_i$, then the canonical map $PF(X_1,X_2,\dots,X_n)\to \otimes_{i=1}^{n}X_i$ given by inclusion on each summand is a quotient map.

\begin{lemma}\label{productlemma}
If $f_j:X_j\to Y_j$, $j\in\{1,2,\dots,n\}$ are continuous maps, then the cross product function denoted $\otimes_{j=1}^{n}f_j:\otimes_{j=1}^{n}X_j\to \otimes_{j=1}^{n}Y_j$ is continuous. Moreover, $\otimes_{j=1}^{n}f_j$ is a quotient map if and only if $f_j$ is quotient for all $j\in\{1,2,\dots,n\}$. 
\end{lemma}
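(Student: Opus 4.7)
The plan is to detect continuity and quotient-ness of $\otimes_{j=1}^{n}f_j$ fiberwise, exploiting the fact from the paragraph preceding the lemma that the fiber decomposition map $q_X\colon PF(X_1,\dots,X_n)\to \otimes_{j=1}^{n}X_j$ (and its $Y$-analogue $q_Y$) is a quotient map. For continuity, the restriction of $\otimes_{j=1}^{n}f_j$ to any projection fiber $F\cong X_k$ of $\otimes_{i=1}^{n}X_i$ is just $f_k$ followed by the inclusion of a projection fiber of $\otimes_{i=1}^{n}Y_i$; both maps are continuous, so $(\otimes_{j=1}^{n}f_j)\circ q_X$ is continuous on every summand and hence continuous, which by the quotient property of $q_X$ yields that $\otimes_{j=1}^{n}f_j$ is continuous.

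For the ``if'' direction of the quotient claim, I would introduce the induced coproduct map $\tilde{f}\colon PF(X_1,\dots,X_n)\to PF(Y_1,\dots,Y_n)$ that, on the summand corresponding to the $k$-th projection fiber through $(a_i)_{i\neq k}$, acts as $f_k$ into the summand through $(f_i(a_i))_{i\neq k}$. If each $f_j$ is quotient, then $\tilde{f}$ is a coproduct of quotient maps and hence quotient. The commutative square with horizontal arrows $q_X,q_Y$ and vertical arrows $\tilde{f},\otimes_{j=1}^{n}f_j$ then makes $(\otimes_{j=1}^{n}f_j)\circ q_X = q_Y\circ\tilde{f}$ a composition of quotients, and hence itself quotient. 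Combining this with the standard fact that whenever $g\circ h$ is quotient with $h$ continuous and $g$ continuous, $g$ is quotient, applied with $h=q_X$ and $g=\otimes_{j=1}^{n}f_j$, delivers that $\otimes_{j=1}^{n}f_j$ is quotient.

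For the converse, assume $\otimes_{j=1}^{n}f_j$ is quotient, fix $j$, and let $V\subseteq Y_j$ satisfy $f_j^{-1}(V)$ open in $X_j$. The crucial observation is that the projection $\pi_j\colon \otimes_{i=1}^{n}X_i\to X_j$ is continuous, since for any open $U\subseteq X_j$ the set $\pi_j^{-1}(U)$ meets every projection fiber openly (being either the whole fiber, the empty set, or a copy of $U$); the same holds on the $Y$-side. Hence $(\otimes_{j=1}^{n}f_j)^{-1}(\pi_j^{-1}(V))=\pi_j^{-1}(f_j^{-1}(V))$ is open in $\otimes_{i=1}^{n}X_i$, and by the quotient hypothesis $\pi_j^{-1}(V)$ is open in $\otimes_{i=1}^{n}Y_i$. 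Choosing $y_i\in Y_i$ for each $i\neq j$ (possible since surjectivity of $\otimes_{j=1}^{n}f_j$ forces each $f_i$ to be surjective) and intersecting $\pi_j^{-1}(V)$ with the projection fiber through $(y_i)_{i\neq j}$, which is homeomorphic to $Y_j$, recovers $V$ as an open subset of $Y_j$. I expect the main obstacle to be this reverse direction, where one must notice that cross-topology projections remain continuous despite the cross topology being strictly finer than the product topology, and then isolate a single factor by restricting along a single projection fiber.
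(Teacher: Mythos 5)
Your proposal is correct and follows essentially the same route as the paper: continuity and the forward quotient implication are both obtained from the commutative square over the fiber-decomposition quotient maps $PF(X_1,\dots,X_n)\to\otimes_{i=1}^{n}X_i$ and $PF(Y_1,\dots,Y_n)\to\otimes_{i=1}^{n}Y_i$, with the top map identified with a disjoint union of copies of the $f_j$. The only divergence is in the converse, where the paper simply observes that the projections $\pi_j$ are open (hence quotient) and applies $\pi_j\circ(\otimes_{i=1}^{n}f_i)=f_j\circ\pi_j$, while you verify the same quotient-detecting property of $\pi_j$ by hand, using continuity of $\pi_j$ in the cross topology and restriction to a single projection fiber --- an equivalent, slightly more pedestrian argument.
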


\begin{proof}
There is a canonical commutative diagram
\[\xymatrix{
PF(X_1,X_2,\dots,X_n) \ar[d] \ar[r]^-{\phi} & PF(Y_1,Y_2,\dots,Y_n) \ar[d]\\
\otimes_{i=1}^{n}X_{i} \ar[r]_-{\otimes_{i=1}^{n}f_i} & \otimes_{i=1}Y_i
}\]where the top map $\phi$ is the restriction of $\otimes_{i=1}^{n}f_i$ on each projection fiber. In particular, such a restriction is the continuous map $f_j$ in one component and constant in all other components. In other words, $\phi$ may be identified with a disjoint union of the maps $f_i$. Therefore, $\phi$ is continuous and is a quotient map if and only if every $f_i$ is quotient. Since both vertical maps are quotient, it follows from the universal property of quotient maps that $\otimes_{j=1}^{n}f_j$ is continuous. Moreover, if each $f_i$ is quotient, then so is $\phi$ and $\otimes_{j=1}^{n}f_j$. The projection maps $\pi_j:\otimes_{i=1}^{n}X_i\to X_j$ are open and therefore are quotient maps. Since $\pi_j\circ \otimes_{i=1}^{n}f_i=f_j\circ \pi_j$, it follows that if $\otimes_{i=1}^{n}f_i$ quotient, then so is each $f_i$.
\end{proof}

\section{Free semitopological monoids}\label{sectionfreesemitopologicalmonoid}

The free monoid on a set $X$ may be represented uniquely up to isomorphism as the monoid of finite words with letters in the set $X$. In particular, $M(X)=\coprod_{n\geq 0}X^n$ where $X^0=\{e\}$ contains the empty word and an $n$-tuple $(x_1,x_2,\dots,x_n)\in X^n$ is represented as a word $w=x_1x_2\dots x_n$. We write $|w|=n$ for the length of such a word, noting that $|e|=0$. The natural operation on $M(X)$ is word concatenation and $e$ is the monoid identity. The monoid structure of $M(X)$ is characterized up to isomorphism as follows: $\rho:X\to M(X)$, $\rho(x)=x$ is the inclusion of free generators and is universal in the sense that for every function $f:X\to N$ to a monoid $N$, there is a unique monoid homomorphism $\wt{f}:M(X)\to N$ such that $\wt{f}\circ\rho=f$. In particular, $\wt{f}$ is defined as $\wt{f}(x_1x_2\dots x_n)=f(x_1)f(x_2)\cdots f(x_n)$ where the product on the right is taken in $N$.

Recall that a \textit{topological monoid} is a monoid $M$ with topology such that the operation $\mu:M\times M\to M$ is continuous. A \textit{semitopological monoid} is a monoid $M$ with topology such that $M\times M\to M$ is separately continuous. According to Lemma \ref{separatecontlemma}, a monoid with topology $M$ is semitopological if and only if $M\otimes M\to M$ is continuous. We will let $\mathbf{TopMon}$ and $\mathbf{STopMon}$ denote the categories of topological and semitopological monoids respectively where in both cases the morphisms are continuous monoid homomorphisms.

When $X$ is a space and each summand $X^n$ is given the product topology, the topological sum $\tm(X)=\coprod_{n\geq 0}X^n$ becomes the free topological monoid on the space $X$. In particular, this results in a functor $\tm:\mathbf{Top}\to\mathbf{TopMon}$ that is left adjoint to the forgetful functor $\mathbf{TopMon}\to\mathbf{Top}$. 

We construct the semitopological analogue using the cross topology. Given a space $X$, we will write $X^{\otimes n}$ for the $n$-fold cross product of $X$ with itself. We define $\stm(X)=\coprod_{n\geq 0}X^{\otimes n}$ and will refer to this as the \textit{free semitopological monoid on }$X$ (this terminology will be justified shortly). Hence, $\stm(X)$ has the same underlying monoid structure as $\tm(X)$ but is equipped with a finer topology.

Recalling that elements of $X^{\otimes n}$ are represented as words $x_1x_2\dots x_n$ with letters in the set $X$, we establish some convenient notation to replace the projection fiber notation above. Given words $w,v\in \stm(X)$, we let $F_{w,v}=wXv=\{wxv\in \stm(X)\mid x\in X\}$. Note that $F_{e,e}=X$, $F_{e,v}=Xv$, and $F_{w,e}=wX$.

For any pair $(w,v)\in \stm(X)^2$, the set $F_{w,v}$ is unique. Hence, $\stm(X)$ is the set-theoretic disjoint union of $\{e\}$ and $F_{w,v}$ ranging over pairs of words $(w,v)\in M(X)^2$. Moreover, for fixed $n\geq 1$, the spaces $F_{w,v}$ such that $|w|+|v|=n-1$, are the projection fibers of $X^{\otimes n}$ and therefore are homeomorphic to $X$. Hence, we have the following.

\begin{lemma}\label{closedinmonoidlemma}
A subset $A\subseteq \stm(X)$ is closed (resp. open) if and only if $A\cap F_{w,v}$ is closed (resp. open) in $F_{w,v}$ for all pairs of words $w,v\in \stm(X)$.
\end{lemma}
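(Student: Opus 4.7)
The plan is to reduce the claim to two pieces of infrastructure already set up in the excerpt. The first is that $\stm(X)=\coprod_{n\geq 0}X^{\otimes n}$ carries the topological coproduct topology, so a subset $A\subseteq \stm(X)$ is closed (resp.\ open) if and only if $A\cap X^{\otimes n}$ is closed (resp.\ open) in $X^{\otimes n}$ for every $n\geq 0$. The second is the observation made immediately before Lemma~\ref{productlemma}: for each $n\geq 1$, a subset $C\subseteq X^{\otimes n}$ is closed (resp.\ open) if and only if its intersection with each projection fiber of $X^{\otimes n}$ is closed (resp.\ open) in that fiber, equivalently, the fiber-decomposition map $PF(X,\dots,X)\to X^{\otimes n}$ is quotient.

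First I would dispense with the $n=0$ summand: $\{e\}=X^{\otimes 0}$ is a clopen summand of $\stm(X)$, so the intersection $A\cap\{e\}$ is automatically both open and closed in $\{e\}$ and imposes no nontrivial condition. For $n\geq 1$, the discussion preceding this lemma identifies the projection fibers of $X^{\otimes n}$ as precisely the sets $F_{w,v}=wXv$ with $w,v\in \stm(X)$ and $|w|+|v|=n-1$, each canonically homeomorphic to $X$ (with the $n=1$ case giving the single fiber $F_{e,e}=X$). As $n$ ranges over the positive integers, this collection runs through every pair of words $(w,v)\in \stm(X)^2$, so the family $\{F_{w,v}\}$ as indexed in the statement of the lemma exhausts all projection fibers across all summands. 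Assembling the coproduct decomposition across $n$ with the projection-fiber decomposition within each $X^{\otimes n}$ yields the stated biconditional for both openness and closedness simultaneously.

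There is essentially no substantive obstacle here; the proof is bookkeeping and direct invocation of prior results. The one point demanding care is the empty-word summand, which must be handled separately because $e$ is not representable as $F_{w,v}$ for any pair of words, but this is harmless since $\{e\}$ is clopen in $\stm(X)$.
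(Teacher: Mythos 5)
Your proposal is correct and matches the paper's own (implicit) argument exactly: the paper derives this lemma from the immediately preceding observation that the sets $F_{w,v}$ with $|w|+|v|=n-1$ are precisely the projection fibers of $X^{\otimes n}$, combined with the coproduct topology on $\stm(X)$, and it likewise sets aside the summand $X^{\otimes 0}=\{e\}$ in a remark following the lemma. No further comment is needed.
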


Note that $A\cap X^{\otimes 0}$ is either empty or contains a single isolated point so it need not be included in the previous lemma. An equivalent way to state the previous lemma is the following: for all $n\geq 1$, the map $\coprod_{n\geq 1}\coprod_{|w|+|v|=n-1}F_{w,v}\to \stm(X)$ given by the inclusion on each summand is a quotient map.

\begin{theorem}\label{stmonoidtheorem}
For any space $X$, $\stm(X)$ is a semitopological monoid. Moreover, $\rho:X\to \stm(X)$ is continuous and universal in the sense that for every continuous function $f:X\to N$ to a semitopological monoid $N$, there is a unique continuous homomorphism $\wt{f}:\stm(X)\to N$ such that $\wt{f}\circ\rho=f$.
\end{theorem}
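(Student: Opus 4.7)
The plan is to verify, in order, continuity of $\rho$, the semitopological monoid structure of $\stm(X)$, and the universal property. Continuity of $\rho\colon X \to \stm(X)$ is immediate since $X$ is the summand $X^{\otimes 1}$ of the coproduct $\coprod_{n\geq 0} X^{\otimes n}$.

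For the semitopological structure, I would fix $v = v_1 v_2 \cdots v_k \in \stm(X)$ and argue that right translation $R_v\colon w \mapsto wv$ is continuous (left translation is symmetric, so separate continuity of $\mu$ will follow). Since $\stm(X)$ is a topological sum, it suffices to show that the restriction $R_v|_{X^{\otimes n}}\colon X^{\otimes n} \to X^{\otimes(n+k)}$ is continuous for every $n\geq 0$. Via the canonical identification $X^{\otimes n} \cong X^{\otimes n} \otimes \{\ast\}^{\otimes k}$, this restriction factors as $\mathrm{id}_{X^{\otimes n}} \otimes c_{v_1} \otimes \cdots \otimes c_{v_k}$, where each $c_{v_i}\colon \{\ast\} \to X$ is constant; applying Lemma \ref{productlemma} to this cross product yields continuity.

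For the universal property, given continuous $f\colon X \to N$ into a semitopological monoid $N$, the universal property of the underlying free monoid $M(X)$ supplies a unique monoid homomorphism $\wt{f}\colon \stm(X) \to N$ with $\wt{f}(x_1 \cdots x_n) = f(x_1) \cdots f(x_n)$, so only continuity remains. Checking this on each summand, I would write $\wt{f}|_{X^{\otimes n}}$ as the composition $X^{\otimes n} \xrightarrow{f^{\otimes n}} N^{\otimes n} \xrightarrow{\mu_n} N$, where $\mu_n$ is the $n$-fold multiplication; Lemma \ref{productlemma} gives continuity of $f^{\otimes n}$. The main step is verifying continuity of $\mu_n$, which I would establish by induction on $n$: the base case $n=2$ is precisely the hypothesis that $N$ is semitopological, converted to continuity on $N\otimes N$ via Lemma \ref{separatecontlemma}, and for $n\geq 3$ I would write $\mu_n = \mu_2 \circ (\mu_{n-1} \otimes \mathrm{id}_N)\colon N^{\otimes(n-1)} \otimes N \to N\otimes N \to N$, combining the inductive hypothesis with Lemma \ref{productlemma}. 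There is no serious obstacle here; the argument is essentially a careful chaining of Lemmas \ref{productlemma} and \ref{separatecontlemma} together with the coproduct structure of $\stm(X)$.
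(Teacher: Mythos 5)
Your proposal is correct, and the universal-property portion is essentially identical to the paper's argument: both write $\wt{f}$ on the summand $X^{\otimes n}$ as $\mu_n\circ f^{\otimes n}$ and obtain continuity of the iterated multiplication $\mu_n:N^{\otimes n}\to N$ from the $n=2$ case (Lemma \ref{separatecontlemma}) via associativity of $\otimes$ and Lemma \ref{productlemma}. Where you diverge is in verifying the semitopological structure. The paper proves the (formally stronger, though equivalent by Lemma \ref{separatecontlemma}) statement that concatenation $\mu:\stm(X)\otimes\stm(X)\to\stm(X)$ is continuous for the cross topology: since $\otimes$ distributes over topological sums, $\stm(X)\otimes\stm(X)$ decomposes into summands $X^{\otimes n}\otimes X^{\otimes m}$, on each of which $\mu$ is literally the associativity homeomorphism onto $X^{\otimes(n+m)}$, so nothing further needs to be checked. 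You instead verify separate continuity directly by fixing $v$ and showing each translation $R_v$ is continuous summand-by-summand, using the identification $X^{\otimes n}\cong X^{\otimes n}\otimes\{\ast\}^{\otimes k}$ and Lemma \ref{productlemma} applied to $\mathrm{id}\otimes c_{v_1}\otimes\cdots\otimes c_{v_k}$; this is a valid argument (cross product with a point is the identity, and constant maps are continuous), and it has the small advantage of not invoking distributivity of $\otimes$ over coproducts, at the cost of a slightly more hands-on computation and of not recording the cleaner fact that $\mu$ is actually continuous on the cross product.
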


\begin{proof}
It suffices to check that the word concatenation operation $\mu:\stm(X)\otimes \stm(X)\to \stm(X)$ is continuous with respect to the cross topology. Since $\otimes$ distributes over topological sums, $\stm(X)\otimes \stm(X)$ is the disjoint union of $X^{\otimes n}\otimes X^{\otimes m}$, $m,n\geq 0$ and the map $\mu$ restricted to this summand is the associativity homeomorphism $X^{\otimes n}\otimes X^{\otimes m}\to X^{\otimes n+m}$. Therefore, concatenation is continuous with respect to the cross topology, proving that $\stm(X)$ is a semitopological monoid.

Certainly, the inclusion map $\rho$ is continuous. Suppose $f:X\to N$ is a continuous function to a semitopological monoid $N$. The unique monoid homomorphism $\wt{f}:\stm(X)\to N$ was described above; it suffices to show that $\wt{f}$ is continuous. Since the operation $\mu_2:N^{\otimes 2}\to N$ is continuous, the associativity of $\otimes$ implies that the $k$-fold operation $\mu_k:N^{\otimes k}\to N$ is continuous. Therefore, $\nu:\stm(N)\to N$ defined as $\mu_k$ on the $k$-th summand is continuous on the disjoint union. Now $\wt{f}=\nu\circ (\coprod_{n\geq 0}f^{\otimes n})$ is the composition of continuous functions and is therefore continuous.
\end{proof}

It follows directly that $\stm:\mathbf{Top}\to \mathbf{STopMon}$ defines a functor that is left adjoint to the forgetful functor $\mathbf{STopMon}\to \mathbf{Top}$. Hence, we refer to $\stm(X)$ as the \textit{free semitopological monoid on} $X$.

\section{Constructing free quasitopological groups}\label{sectionfreeqtopgroups}

We recall the construction of the free group $F(X)$ on a set $X$ as a group of reduced words: let $X^{-1}$ be a homeomorphic copy of $X$ with elements written with superscript $x^{-1}$. Then $F(X)=M(X\sqcup X^{-1})/\mathord{\sim}$ where $\sim$ is the relation generated by identifying $xx^{-1}$ and $x^{-1}x$ with $e$ whenever they appear in a word. Every word is represented by a unique reduced form in which no such cancellations are possible and there is a reduction function $R:M(X\sqcup X^{-1})\to F(X)$ taking a word $w$ to its reduced representative $R(w)$. The operation $(w,v)\mapsto R(wv)$ of word concatenation followed by reduction is a group operation on $F(X)$. The inclusion of free generators $\sigma=R\circ \rho:X\to F(X)$ is universal in the sense the any function $f:X\to G$ to a group $G$ extends uniquely to a group homomorphism $\wh{f}:F(X)\to G$ such that $\wh{f}\circ \sigma=f$. In particular, $\wh{f}$ is given by $\wh{f}=\wt{f^{\pm}}\circ R$ where $\wt{f^{\pm}}:M(X\sqcup X^{-1})\to G$ is the uniquely induced monoid homomorphism induced by the function $f^{\pm}:X\sqcup X^{-1}\to G$ where $f^{\pm}(x)=f(x)$ and $f^{\pm}(x^{-1})=f(x)^{-1}$.

As mentioned in the introduction, if we give $F(X)$ the quotient topology with respect to reduction $R:\tm(X\sqcup X^{-1})\to F(X)$, then $F(X)$ is not always the free topological group on $X$. This quotient topology construction does give $F(X)$ the structure of a quasitopological group that lies somewhere between the free quasitopological group and the free topological group on $X$ (this is studied as the ``reduction topology" in \cite{Br10.1}). A key reason for this failure is that the direct product of two quotient maps is not always a quotient map. We use the contrasting fact from Lemma \ref{productlemma} that the cross product does preserve quotient maps.

\begin{definition}
For a space $X$, let $\fq(X)$ be the free group $F(X)$ with the quotient topology inherited from the reduction function $R:\stm(X\sqcup X^{-1})\to \fq(X)$, that is, as the natural quotient of the free semitopological monoid on $X\sqcup X^{-1}$.
\end{definition}

Combining this definition with Lemma \ref{closedinmonoidlemma}, we have the following practical characterization of the topology of $\fq(X)$.

\begin{lemma}\label{closedingrouplemma}
A set $C\subseteq \fq(X)$ is closed (resp. open) if and only if $R^{-1}(C)\cap F_{w,v}$ is closed (resp. open) for all pairs of words $w,v\in \stm(X\sqcup X^{-1})$.
\end{lemma}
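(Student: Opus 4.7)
The plan is to observe that this lemma is essentially an immediate corollary of the definition of $\fq(X)$ combined with Lemma \ref{closedinmonoidlemma} applied to the space $Y = X \sqcup X^{-1}$. There should be no substantive obstacle; the proof is really just chaining two characterizations together.

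First, I would invoke the definition of $\fq(X)$: it carries the quotient topology induced by the reduction function $R:\stm(X\sqcup X^{-1}) \to \fq(X)$. By the universal property of quotient maps, a set $C \subseteq \fq(X)$ is closed (respectively open) if and only if its preimage $R^{-1}(C)$ is closed (respectively open) in $\stm(X\sqcup X^{-1})$.

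Next, I would apply Lemma \ref{closedinmonoidlemma} to the space $X\sqcup X^{-1}$ and the subset $A = R^{-1}(C) \subseteq \stm(X\sqcup X^{-1})$. That lemma characterizes closedness (respectively openness) in $\stm(X\sqcup X^{-1})$ precisely via intersections with the projection-fiber sets $F_{w,v} = w(X\sqcup X^{-1})v$ for all pairs $w,v \in \stm(X\sqcup X^{-1})$. Combining the two equivalences yields the claim. The singleton fiber $F_{e,e}$ at length zero is covered by the remark following Lemma \ref{closedinmonoidlemma} (it is an isolated point of $\stm(X\sqcup X^{-1})$ and so any intersection with it is automatically both open and closed in it), so no separate handling is required.

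Since the argument is just a two-step unpacking, there is no real technical difficulty. The only thing worth being explicit about is that the $F_{w,v}$ referenced here are exactly those indexed by words in the alphabet $X\sqcup X^{-1}$, i.e.\ the projection fibers of the iterated cross products $(X\sqcup X^{-1})^{\otimes n}$ making up $\stm(X\sqcup X^{-1})$, and not some different family associated to $\fq(X)$ itself.
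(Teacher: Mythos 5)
Your proof is correct and is exactly the paper's (implicit) argument: the paper states this lemma as an immediate consequence of the quotient-topology definition of $\fq(X)$ together with Lemma \ref{closedinmonoidlemma} applied to $X\sqcup X^{-1}$. One small terminological slip: the length-zero summand is $(X\sqcup X^{-1})^{\otimes 0}=\{e\}$, not $F_{e,e}$ (which equals $X\sqcup X^{-1}$, the set of length-one words); it is that isolated empty word, which is not of the form $F_{w,v}$ at all, that needs no separate handling.
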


\begin{theorem}
For any space $X$, $\fq(X)$ is a quasitopological group. Moreover, the inclusion of generators $\sigma:X\to \fq(X)$ is continuous and universal in the sense that for any continuous function $f:X\to G$ to a quasitopological group $G$ there is a unique continuous homomorphism $\wh{f}:\fq(X)\to G$ such that $\wh{f}\circ\sigma=f$.
\end{theorem}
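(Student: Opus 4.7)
The plan is to transfer each required piece of structure from $\stm(X\sqcup X^{-1})$ down to $\fq(X)$ through the quotient map $R$, leaning on Lemma \ref{productlemma} to guarantee that cross products of quotients remain quotient maps. Apart from that observation, the argument will be a straightforward application of the universal property already established in Theorem \ref{stmonoidtheorem}.

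To check that inversion is continuous, I will introduce the word-reversal involution $\iota:\stm(X\sqcup X^{-1})\to \stm(X\sqcup X^{-1})$ defined by $\iota(y_1 y_2\cdots y_n)=\tau(y_n)\cdots \tau(y_2)\tau(y_1)$, where $\tau:X\sqcup X^{-1}\to X\sqcup X^{-1}$ is the canonical swap homeomorphism. On each summand $(X\sqcup X^{-1})^{\otimes n}$, $\iota$ factors as $\otimes_{i=1}^{n}\tau$ followed by the coordinate-reversing homeomorphism; both factors are continuous (the first by Lemma \ref{productlemma}, the second a permutation of coordinates on an iterated cross product), so $\iota$ is continuous. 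Since $R\circ \iota$ coincides with the composition of set-theoretic inversion on $F(X)$ with $R$, the universal property of the quotient $R$ then yields continuity of $(\cdot)^{-1}:\fq(X)\to \fq(X)$.

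For separate continuity of the group operation, I will look at the commutative square whose top edge is concatenation $\mu:\stm(X\sqcup X^{-1})\otimes \stm(X\sqcup X^{-1})\to \stm(X\sqcup X^{-1})$ (continuous by Theorem \ref{stmonoidtheorem}) and whose vertical edges are $R\otimes R$ and $R$. By Lemma \ref{productlemma}, $R\otimes R$ is a quotient map, so the group operation $\nu:\fq(X)\otimes \fq(X)\to \fq(X)$ is continuous because $R\circ \mu$ is. Lemma \ref{separatecontlemma} then converts this into separate continuity of multiplication on $\fq(X)\times \fq(X)$; combined with continuous inversion, this establishes that $\fq(X)$ is a quasitopological group.

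For the universal property, $\sigma=R\circ \rho$ is continuous as a composition. Given a continuous $f:X\to G$ with $G$ a quasitopological group, $G$ is in particular a semitopological monoid, and the map $f^{\pm}:X\sqcup X^{-1}\to G$ sending $x\mapsto f(x)$ and $x^{-1}\mapsto f(x)^{-1}$ is continuous because inversion in $G$ is continuous. Theorem \ref{stmonoidtheorem} extends $f^{\pm}$ uniquely to a continuous monoid homomorphism $\wt{f^{\pm}}:\stm(X\sqcup X^{-1})\to G$. Since $G$ is a group, $\wt{f^{\pm}}$ sends every cancelling pair $xx^{-1}$, $x^{-1}x$ to $e$, so it descends set-theoretically through $R$ to the (unique) algebraic extension $\wh{f}:F(X)\to G$. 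Because $R$ is a quotient map and $\wt{f^{\pm}}$ is continuous, $\wh{f}$ is continuous; uniqueness is purely algebraic. The single subtle ingredient in the whole argument is the preservation of quotient maps under $\otimes$, which is exactly what distinguishes this construction from a would-be topological-group analogue that would use the ordinary product.
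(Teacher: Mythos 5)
Your proposal is correct and follows essentially the same route as the paper: transfer inversion and multiplication through the quotient $R$, using that $\otimes$ preserves quotient maps (Lemma \ref{productlemma}) for the multiplication square, and derive the universal property from Theorem \ref{stmonoidtheorem}. The only cosmetic difference is in the inversion step, where you factor the word-reversal map as a cross product of swap homeomorphisms followed by a coordinate permutation, whereas the paper checks continuity fiberwise via the projection-fiber quotient $\coprod F_{w,v}\to \stm(X\sqcup X^{-1})$; both reduce to the same observation that reversal restricts to a homeomorphism on each building block.
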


\begin{proof}
First, we check that group inversion $in:\fq(X)\to \fq(X)$ is continuous. Consider the reverse function $r:\stm(X\sqcup X^{-1})\to \stm(X\sqcup X^{-1})$, defined by $r(x_{1}^{\epsilon_1}x_{2}^{\epsilon_2}\dots x_{n}^{\epsilon_n})=x_{n}^{-\epsilon_n}\dots x_{2}^{-\epsilon_2}x_{1}^{-\epsilon_1}$ and the following diagram where all vertical maps are quotient and the bottom square commutes. We define a continuous map $g$ making the top square commute.
\[\xymatrix{
\coprod_{w,v\in \stm(X\sqcup X^{-1})}F_{w,v} \ar[d]_{Q} \ar[r]^{g} & \coprod_{w,v\in \stm(X\sqcup X^{-1})}F_{w,v} \ar[d]^-{Q}\\
\stm(X\sqcup X^{-1}) \ar[d]_-{R}  \ar[r]^-{r} & \stm(X\sqcup X^{-1}) \ar[d]^-{R}\\
\fq(X) \ar[r]_-{in} & \fq(X)
}\]
Since all of the vertical maps are quotient, it suffices to define $g$ so that $g$ is continuous and the top square commutes. In particular, for $w,v\in \stm(X\sqcup X^{-1})$, we define $g$ so that its restriction to $F_{w,v}$ is the function $g_{w,v}:F_{w,v}\to F_{r(v),r(w)}$, $g(wx^{\epsilon}v)=r(v)x^{-\epsilon}r(w)$. Since $g_{w,v}$ may be identified with the homeomorphism $X\to X^{-1}$ or $X^{-1}\to X$, it is continuous. Therefore, $g$ is continuous. With this definition, the diagram commutes and it follows that $r$ is continuous. Since all vertical maps in the above diagram are quotient, we conclude that inversion $in$ is continuous.

Next, we must show that the group operation $\gamma:\fq(X)\otimes \fq(X)\to \fq(X)$, $\gamma(w,v)=R(wv)$ is continuous. According to Theorem \ref{stmonoidtheorem}, $\stm(X\sqcup X^{-1})$ is a semitopological monoid, which means that the top map in the following commutative diagram (word concatenation) is continuous. Since the vertical maps are quotient maps (recall the second statement of Lemma \ref{productlemma} for the left map), the bottom map is continuous.
\[\xymatrix{
\stm(X\sqcup X^{-1})\otimes \stm(X\sqcup X^{-1}) \ar[d]_{R\otimes R} \ar[r] & \stm(X\sqcup X^{-1}) \ar[d]^-{R}\\
\fq(X)\otimes \fq(X) \ar[r]_-{\gamma} & \fq(X)
}\]

Since both $R$ and the inclusion $\rho:X\to \stm(X\sqcup X^{-1})$ are continuous, the composition $\sigma=R\circ \rho:X\to \stm(X\sqcup X^{-1})\to \fq(X)$ is continuous. To check the universal property of $\sigma$, let $f:X\to G$ be a map to a quasitopological group $G$. We must establish that the unique homomorphism $\wh{f}:\fq(X)\to G$ satisfying $\wh{f}\circ\sigma=f$ is continuous. Recall that we first extend $f$ to a map $f^{\pm}:X\sqcup X^{-1}\to G$ by $f(x^{-1})=f(x)^{-1}$. Since inversion is continuous in $G$, $f^{\pm}$ is continuous. Since $G$ is a semitopological monoid, Theorem \ref{stmonoidtheorem} ensures that there is a unique continuous monoid homomorphism $\wt{f}:\stm(X\sqcup X^{-1})\to G$ such that $\wt{f}\circ\rho=f^{\pm}$. Since $ \wh{f}\circ R=\wt{f}$ where $R$ is quotient and $\wt{f}$ is continuous, $\wh{f}$ is continuous.
\[\xymatrix{
& \stm(X\sqcup X^{-1}) \ar[d]^-{R} \ar[dr]^-{\wt{f}}\\
X \ar[ur]^-{\rho} \ar@/_2pc/[rr]_-{f} \ar[r]_-{\sigma} & \fq(X) \ar[r]_-{\wh{f}} & G
}\]
\end{proof}

A continuous map $f:X\to Y$ canonically induces a continuous monoid homomorphism $\stm(f\sqcup f^{-1}):\stm(X\sqcup X^{-1})\to\stm(Y\sqcup Y^{-1})$, which in turn induces a continuous group homomorphism $\fq(f):\fq(X)\to\fq(Y)$. Indeed, the continuity of $\fq(f)$ follows from the commutativity of the following square where the vertical maps are quotient. \[
\xymatrix{
\stm(X\sqcup X^{-1}) \ar[d]_-{R} \ar[r]^-{\stm(f)} & \stm(Y\sqcup Y^{-1}) \ar[d]^-{R}\\
\fq(X) \ar[r]_-{\fq(f)} & \fq(Y)
}\]
It follows that $\fq:\mathbf{Top}\to\mathbf{qTopGrp}$ is a functor left adjoint to the forgetful functor $\mathbf{qTopGrp}\to\mathbf{Top}$.

\begin{remark}
Recalling the construction of $\stm(X\sqcup X^{-1})$ using the cross topology, note that we have explicitly constructed $\fq(X)$ as the quotient space of a disjoint union of copies of $X$. Therefore, if $\mathcal{C}$ is a coreflective subcategory of $\mathbf{Top}$, i.e. one which is closed under forming disjoint unions and quotient spaces, then $\fq(X)\in\mathcal{C}$ whenever $X\in\mathcal{C}$. For example, if $X$ is sequential, a k-space, locally connected, or locally path-connected, then so is $\fq(X)$.
\end{remark}

\begin{corollary}\label{quotientmapcor}
If a map $f:X\to Y$ is a quotient map, then the induced homomorphism $\fq(f):\fq(X)\to\fq(Y)$ is a quotient map.
\end{corollary}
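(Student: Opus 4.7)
The plan is to chase quotient maps around the commutative square
\[\xymatrix{
\stm(X\sqcup X^{-1}) \ar[d]_-{R_X} \ar[r]^-{\stm(f\sqcup f^{-1})} & \stm(Y\sqcup Y^{-1}) \ar[d]^-{R_Y}\\
\fq(X) \ar[r]_-{\fq(f)} & \fq(Y)
}\]
displayed before the corollary. The vertical arrows $R_X$ and $R_Y$ are quotient by the very definition of $\fq$. If I can show that the top arrow $\stm(f\sqcup f^{-1})$ is a quotient map, then $\fq(f)\circ R_X = R_Y\circ \stm(f\sqcup f^{-1})$ is a composition of quotient maps, hence quotient; and then because $R_X$ is a quotient surjection, the standard fact that $p\circ q$ quotient plus $q$ quotient forces $p$ to be quotient gives continuity of the saturation condition for $\fq(f)$.

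So the real content is verifying that $\stm(f\sqcup f^{-1})$ is quotient. First, since coproducts of quotient maps are quotient, the map $f\sqcup f^{-1}:X\sqcup X^{-1}\to Y\sqcup Y^{-1}$ is quotient. Write $g=f\sqcup f^{-1}$ for brevity. By the ``moreover'' clause of Lemma \ref{productlemma}, the $n$-fold cross product $g^{\otimes n}:(X\sqcup X^{-1})^{\otimes n}\to (Y\sqcup Y^{-1})^{\otimes n}$ is a quotient map for every $n\geq 0$ (for $n=0$ this is the identity on a point). Since $\stm(g)=\coprod_{n\geq 0}g^{\otimes n}$ is the disjoint union of quotient maps, it is itself a quotient map, completing the argument.

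There is no serious obstacle here; the heavy lifting is precisely the portion of Lemma \ref{productlemma} asserting that the cross product preserves quotient maps. This is, of course, exactly the property that fails for the ordinary product topology and motivated introducing the cross topology in the first place, so the proof is really just invoking the design feature of the construction.
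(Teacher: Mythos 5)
Your proposal is correct and follows essentially the same route as the paper: both arguments rest on Lemma \ref{productlemma} (cross products preserve quotient maps) to conclude that $\stm(f\sqcup f^{-1})$ is quotient, and then chase the commutative square with quotient vertical reduction maps. Your write-up merely makes explicit the intermediate steps (coproducts of quotient maps are quotient, and the two-out-of-three property for quotient maps) that the paper leaves implicit.
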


\begin{proof}
Recall from Lemma \ref{productlemma} that $\otimes$ preserves quotient maps. Therefore, if $f$ is a quotient map, $\stm(f)$ is a topological sum of cross products of quotient maps and is therefore quotient. Since word reduction maps are quotient by construction, it follows from the last commutative square above that $\fq(f)$ is a quotient map.
\end{proof}

\begin{remark}\label{topgrpremark}
The category $\mathbf{TopGrp}$ of topological groups is a reflective subcategory of $\mathbf{qTopGrp}$, that is for each quasitopological group $G$, there is a topological group $\tau(G)$ and a continuous homomorphism $j: G \to \tau(G)$ such that for every continuous homomorphism $f:G\to H$ to a topological group $H$, there is a continuous homomorphism $g:\tau(G)\to H$ such that $ g\circ j=f$. Moreover, $\tau(G)$ may be constructed as the underlying group of $G$ equipped with a coarser topology (implying that $j$ is the identity homomorphism). This coarser topology may be constructed via transfinite recursion \cite{Br10.1}. The universal property ensures that $\tau(G)=G$ if and only if $G$ is a topological group.

It is straightforward from universal properties that there is a natural isomorphism $\fm(X)\cong\tau(\fq(X))$. Hence, $\fq(X)$ is a topological group if and only if $\fq(X)=\fm(X)$. Certainly, $\fq(X)=\fm(X)$ occurs if $X$ is discrete. We will show in Theorem \ref{discretetheorem} below that for non-discrete, $T_1$ spaces $\fq(X)$ is not a topological group. Hence, $\fq(X)$ is only a topological group in trivial situations.
\end{remark}

Despite the dramatic difference between $\fq(X)$ and $\fm(X)$, there is at least one topological similarity. It is shown in \cite[Cororollary 3.9]{BrazFGasTopGrp} that a quasitopological group $G$ and its topological reflection $\tau(G)$ always share the same lattice of (not necessarily normal) open subgroups. Hence, a subgroup $H\leq F(X)$ is open in $\fq(X)$ if and only if it is open in $\fm(X)$.

\section{The topology of free quasitopological groups}\label{sectiontopologyoffqgs}

For all integers $n\geq 0$, let $\fq(X)_n$ be the subset of $\fq(X)$ consisting of words of length at most $n$. Notice that $\fq(X)_0=\{e\}$ is the trivial subgroup.

\begin{remark}\label{reductionremark}
Within $\stm(X\sqcup X^{-1})$, we will be required to analyze how words of the form $wx^{\epsilon}v\in F_{w,v}$ may be reduced to obtain the unique reduced representative $R(wx^{\epsilon}v)$ in $\fq(X)$. There are several reduction patterns possible. However, either $R(wx^{\epsilon}v)=R(w)x^{\epsilon}R(v)$ or there exists a initial subword $w_1$ of $R(w)$ and terminal subword $v_1$ of $R(v)$ such that $R(wx^{\epsilon}v)=w_1v_1$. The latter case occurs when either the last letter of $R(w)$ or the first letter of $R(v)$ is $x^{-\epsilon}$ (possibly followed by more reduction). Hence, either $R(wx^{\epsilon}v)=R(w)x^{\epsilon}R(v)$ or $|R(wx^{\epsilon}v)|<|R(w)|+|R(v)|$.
\end{remark}

\begin{lemma}\label{fnclosedlemma}
If $X$ is $T_1$ and $n\geq 0$, then $\fq(X)_n$ is closed in $\fq(X)$.
\end{lemma}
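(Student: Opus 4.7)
The plan is to apply Lemma \ref{closedingrouplemma}, which reduces the problem to showing that for every pair of words $w,v\in\stm(X\sqcup X^{-1})$, the intersection $R^{-1}(\fq(X)_n)\cap F_{w,v}$ is closed in $F_{w,v}$. Parameterizing $F_{w,v}$ via the homeomorphism $z\mapsto wzv$ from $X\sqcup X^{-1}$, this amounts to showing that
\[S_{w,v}=\{z\in X\sqcup X^{-1}\mid |R(wzv)|\leq n\}\]
is closed in $X\sqcup X^{-1}$; the remaining singleton summand $\{e\}$ of $\stm(X\sqcup X^{-1})$ maps to the identity of $\fq(X)$, which always lies in $\fq(X)_n$ and requires no further argument.

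The main step is to analyze how the length $|R(wzv)|$ depends on $z$. Write $R(w)=a_1\cdots a_k$ and $R(v)=b_1\cdots b_l$ in reduced form. Remark \ref{reductionremark} furnishes a clean dichotomy: either $R(wzv)=R(w)\,z\,R(v)$, in which case $|R(wzv)|=k+1+l$, or else $|R(wzv)|<k+l$. The exceptional second case can occur only when $z$ cancels directly with $a_k$ or with $b_1$, that is, only when $z=a_k^{-1}$ (if $k\geq 1$) or $z=b_1^{-1}$ (if $l\geq 1$). Hence the length function $z\mapsto |R(wzv)|$ takes the constant value $k+1+l$ off an exceptional set $E\subseteq X\sqcup X^{-1}$ of cardinality at most two.

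The remainder is a short case analysis. If $k+1+l\leq n$, then $|R(wzv)|\leq k+1+l\leq n$ for every $z$ (exceptional values only decrease the length), so $S_{w,v}=X\sqcup X^{-1}$ is closed. If instead $k+1+l>n$, then no $z\notin E$ lies in $S_{w,v}$, so $S_{w,v}\subseteq E$ is finite; since $X$ is $T_1$, so is $X\sqcup X^{-1}$, and finite subsets of a $T_1$ space are closed.

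The argument is essentially a case check; the only genuine content is correctly enumerating the exceptional values of $z$ that trigger cancellation, and this is packaged by Remark \ref{reductionremark}. The $T_1$ hypothesis enters at exactly one point — namely, to guarantee that the at-most-two-point exceptional set is closed — so no serious obstacle is expected.
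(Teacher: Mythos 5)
Your proof is correct and follows the paper's strategy --- reduce to the fibers $F_{w,v}$ via Lemma \ref{closedingrouplemma} and invoke the cancellation dichotomy of Remark \ref{reductionremark} --- differing only in the final step. Where the paper fixes a point $wx^{\epsilon}v$ of the complement and builds an open neighborhood $wU^{\epsilon}v$ with $U$ the ($T_1$-open) complement of the set of letters of $wv$ other than $x$, you instead compute the trace of $R^{-1}(\fq(X)_n)$ on each fiber exactly: it is either the whole fiber (when $|R(w)|+1+|R(v)|\leq n$) or contained in the at-most-two-point set $\{a_k^{-1},b_1^{-1}\}$ of letters that can actually trigger cancellation, which is closed since $X$ is $T_1$ --- a slightly sharper but essentially equivalent deployment of the same hypothesis.
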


\begin{proof}
Fix $n\geq 0$. We must check that $R^{-1}(\fq(X)_n)$ is closed in $\stm(X\sqcup X^{-1})$. Fix words $w,v\in \stm(X\sqcup X^{-1})$. It suffices to show that $C=R^{-1}(\fq(X)_n)\cap F_{w,v}$ is closed in $F_{w,v}$. Note that $C$ consists of words of the form $wx^{\epsilon}v$, $\epsilon\in\{1,-1\}$ for which $|R(wx^{\epsilon}v)|\leq n$. Suppose $wx^{\epsilon}v\in F_{w,v}$ where $|R(wx^{\epsilon}v)|>n$. Recall from Remark \ref{reductionremark} that either $R(wx^{\epsilon}v)=R(w)x^{\epsilon}R(v)$ or $|R(wx^{\epsilon}v)|<|R(w)|+|R(v)|$. In either situation, $n\leq |R(w)|+|R(v)|$.

Since $X$ is $T_1$, \[U=X\backslash\{a\in X\mid a^{\pm 1}\text{ is a letter of }wv\text{ and }a\neq x\}\] is an open neighborhood of $x$ in $X$. Since $U^{\epsilon}$ is open in $X\sqcup X^{-1}$, the set $wU^{\epsilon}v$ is an open neighborhood of $wx^{\epsilon}v$ in $F_{w,v}$. We claim that $wU^{\epsilon}v\cap C=\emptyset$. Let $z\in U$. Then $wz^{\epsilon}v\in wU^{\epsilon}v$. Certainly, if $z=x$, then $|R(wx^{\epsilon}v)|>n$. Hence, we assume $z\neq x$. Note that $R(wz^{\epsilon}v)=R(R(w)z^{\epsilon}R(v))$ and since $z\in U\backslash \{x\}$, the letters $z,z^{-1}$ appear in neither $R(w)$ nor $R(v)$. Hence, $R(wz^{\epsilon}v)=R(w)z^{\epsilon}R(v)$. In particular, $|R(wz^{\epsilon}v)|=|R(w)|+1+|R(v)|\geq n+1$. This proves $C\cap F_{w,v}$ is closed in $F_{w,v}$.
\end{proof}

\begin{corollary}\label{inductivelimitcorollary}
If $X$ is $T_1$, then $\fq(X)=\varinjlim_{n}\fq(X)_n$, that is $C\subseteq \fq(X)$ is closed if and only if $C\cap \fq(X)_n$ is closed in $\fq(X)_n$ for all $n\geq 0$.
\end{corollary}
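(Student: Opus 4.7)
The plan is to leverage Lemma \ref{closedingrouplemma} together with Lemma \ref{fnclosedlemma} by observing that the key length bound $|R(wx^\epsilon v)|\le |w|+1+|v|$ forces the reduction map restricted to $F_{w,v}$ to land in a single $\fq(X)_N$.

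The forward implication is immediate: since $\fq(X)_n$ carries the subspace topology, if $C$ is closed in $\fq(X)$ then $C\cap \fq(X)_n$ is closed in $\fq(X)_n$ for every $n\geq 0$.

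For the reverse implication, assume $C\cap \fq(X)_n$ is closed in $\fq(X)_n$ for every $n$. Since Lemma \ref{fnclosedlemma} ensures $\fq(X)_n$ is closed in $\fq(X)$ (this is where the $T_1$ hypothesis is used), each $C\cap \fq(X)_n$ is in fact closed in the ambient space $\fq(X)$. By Lemma \ref{closedingrouplemma}, to show $C$ itself is closed it suffices to show that $R^{-1}(C)\cap F_{w,v}$ is closed in $F_{w,v}$ for every pair of words $w,v\in \stm(X\sqcup X^{-1})$. Fix such a pair and let $N=|w|+1+|v|$. Every element of $F_{w,v}$ has the form $wx^{\epsilon}v$ with $|wx^{\epsilon}v|=N$, so $|R(wx^{\epsilon}v)|\leq N$ and therefore $R(F_{w,v})\subseteq \fq(X)_N$. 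Consequently, for any $z\in F_{w,v}$ we have $R(z)\in C$ if and only if $R(z)\in C\cap \fq(X)_N$, giving the set-theoretic identity
\[
R^{-1}(C)\cap F_{w,v}\;=\;R^{-1}\!\bigl(C\cap \fq(X)_N\bigr)\cap F_{w,v}.
\]
Since $C\cap \fq(X)_N$ is closed in $\fq(X)$, Lemma \ref{closedingrouplemma} (in its forward direction) tells us that the right-hand side is closed in $F_{w,v}$, which is exactly what we needed.

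There is no real obstacle here beyond assembling the two previously proved lemmas; the only substantive point is the observation $R(F_{w,v})\subseteq \fq(X)_N$, which converts the hypothesis about $C\cap \fq(X)_n$ into the projection-fiber condition required by Lemma \ref{closedingrouplemma}. All the genuine work has already been done in Lemma \ref{fnclosedlemma}, which is where the $T_1$ hypothesis is consumed.
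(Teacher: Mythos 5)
Your proposal is correct and follows essentially the same route as the paper: both directions, the use of Lemma \ref{fnclosedlemma} to upgrade $C\cap \fq(X)_n$ to a closed subset of $\fq(X)$, and the key identity $R^{-1}(C)\cap F_{w,v}=R^{-1}(C\cap \fq(X)_{|w|+|v|+1})\cap F_{w,v}$ all match the paper's argument. No issues.
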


\begin{proof}
One direction is clear. Suppose that $C\subseteq \fq(X)$ such that $C\cap \fq(X)_n$ is closed in $\fq(X)_n$ for all $n\geq 0$. By Lemma \ref{fnclosedlemma}, $\fq(X)_n$ is closed in $\fq(X)$ and so $R^{-1}(C\cap \fq(X)_n)$ is closed in $\stm(X\sqcup X^{-1})$ for all $n\geq 0$. Fix $w,v\in\stm(X\sqcup X^{-1})$. Then $R^{-1}(C)\cap F_{w,v}=R^{-1}(C\cap \fq(X)_{|w|+|v|+1})\cap F_{w,v}$ is closed in $F_{w,v}$. According to Lemma \ref{closedingrouplemma}, $C$ is closed in $\fq(X)$.
\end{proof}

For each $n\geq 0$, let ${\bf i_n}:\coprod_{i=0}^{n}(X\sqcup X^{-1})^{\otimes i}\to \fq(X)_n$ be the restriction of the quotient map $R$. It is not immediately clear that ${\bf i_n}$ is itself a quotient map since $\fq(X)_n$ has the subspace topology inherited from $\fq(X)$. The desirable property that ${\bf i_n}$ is quotient for a given $n$ often fails in the free topological group setting; as mentioned in the introduction, the intricacies of this failure have been studied extensively.

\begin{lemma}\label{restrictedquotientlemma}
If $X$ is $T_1$, then the subspace topology of $\fq(X)_n$ agrees with the quotient topology with respect to the restricted reduction map ${\bf i_n}:\coprod_{i=0}^{n}(X\sqcup X^{-1})^{\otimes i}\to \fq(X)_n$.
\end{lemma}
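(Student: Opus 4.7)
The plan is to compare the two topologies on $\fq(X)_n$ in both directions. The quotient topology $\tau_q$ refines the subspace topology $\tau_s$ essentially for free: ${\bf i_n}$ is the restriction of $R$ composed with the inclusion $\fq(X)_n\hookrightarrow\fq(X)$, and is thus continuous into $(\fq(X)_n,\tau_s)$. For the reverse refinement, I would take a set $C\subseteq\fq(X)_n$ that is closed in $\tau_q$, i.e.\ with ${\bf i_n}^{-1}(C)$ closed, and aim to show $C$ is closed in $\fq(X)$; this suffices because $\fq(X)_n$ is itself closed in $\fq(X)$ by Lemma \ref{fnclosedlemma}. By Lemma \ref{closedingrouplemma}, closedness in $\fq(X)$ reduces to showing $R^{-1}(C)\cap F_{w,v}$ is closed in $F_{w,v}$ for every pair $w,v\in\stm(X\sqcup X^{-1})$.

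To do this, fix $w,v$ and identify $F_{w,v}\cong X\sqcup X^{-1}$ via $z\mapsto wzv$; let $K$ denote $R^{-1}(C)\cap F_{w,v}$ viewed inside $X\sqcup X^{-1}$. By Remark \ref{reductionremark}, $R(wzv)=R(w)zR(v)$ unless $z$ cancels with the last letter of $R(w)$ or the first letter of $R(v)$; the set $E$ of such exceptional letters has at most two elements. Set $m:=|R(w)|+1+|R(v)|$ and split on the size of $m$. If $m>n$, then for every $z\notin E$ the word $R(wzv)=R(w)zR(v)$ has length $m>n$ and so $R(wzv)\notin\fq(X)_n\supseteq C$; hence $K\subseteq E$ is finite, which is closed because $X$ and therefore $X\sqcup X^{-1}$ is $T_1$. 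If instead $m\le n$, I would define $\iota\colon X\sqcup X^{-1}\to\coprod_{i=0}^{n}(X\sqcup X^{-1})^{\otimes i}$ by $\iota(z)=R(w)zR(v)$, with image inside the $m$-th summand along the projection fiber that varies the middle coordinate. The cross topology restricts to the original topology on any projection fiber, so $\iota$ is continuous, and the computation ${\bf i_n}\circ\iota(z)=R(R(w)zR(v))=R(wzv)$ shows $K=\iota^{-1}({\bf i_n}^{-1}(C))$, which is closed in $X\sqcup X^{-1}$ by the hypothesis on $C$.

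The delicate point is the case $m>n$: here the word $R(w)zR(v)$ no longer lies in the domain of ${\bf i_n}$, so one cannot simply pull back the closed set ${\bf i_n}^{-1}(C)$ along a natural inclusion. The remedy is the length bookkeeping in Remark \ref{reductionremark}—only at most two values of $z$ can produce a reduction of length at most $n$—combined with the $T_1$ hypothesis, which makes that finite exceptional set closed in $X\sqcup X^{-1}$. This is precisely where the $T_1$ assumption on $X$ is essential.
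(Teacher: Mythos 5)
Your argument is correct, but it takes a genuinely different route from the paper's. Both proofs reduce, via Lemma \ref{closedingrouplemma}, to showing that a $\tau_q$-closed $C$ has $R^{-1}(C)\cap F_{w,v}$ closed in every fiber $F_{w,v}$; the paper then runs an induction on $|w|+|v|$, using the hypothesis directly when $|w|+|v|\leq n-1$ and, in the inductive step, building explicit saturated neighborhoods $wU^{\epsilon}v$ by deleting the finitely many letters of $wv$ (this is where $T_1$ enters for them). You avoid the induction entirely by splitting on $m=|R(w)|+1+|R(v)|$: when $m\leq n$ you pull ${\bf i}_{\bf n}^{-1}(C)$ back along the continuous insertion $\iota(z)=R(w)zR(v)$ into the fiber $F_{R(w),R(v)}$ of the $m$-th summand, and when $m>n$ you observe via Remark \ref{reductionremark} that the trace of $R^{-1}(C)$ on the fiber sits inside the at-most-two-element exceptional set, closed by $T_1$. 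Your version is shorter and isolates the role of $T_1$ more cleanly; the paper's version has the side benefit that the explicit neighborhoods it constructs are the same kind reused later (e.g., in the proof of Theorem \ref{mainthm2}). One small remark: you assert that the cross topology restricts to the \emph{original} topology on a projection fiber, but all you need is that the inclusion of the fiber (with its natural topology) into the cross product is continuous, which is immediate from the definition of the cross topology; the stronger homeomorphism claim is true here but requires the $T_1$ hypothesis on the other factors, so it is cleaner not to lean on it.
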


\begin{proof}
The statement is clearly true for $n=0$ since $\fq(X)_0$ is a one-point space. We focus on the case when $n\geq 1$. It is easy to see that the quotient topology on $\fq(X)_n$ is finer than the subspace topology inherited from $\fq(X)$. To check that they agree, suppose that $C\subseteq \fq(X)_n$ is closed in the quotient topology with respect to ${\bf i_n}$. By Lemma \ref{fnclosedlemma}, $\fq(X)_n$ is a closed subset of $\fq(X)$. Therefore, it suffices to show that $C$ is closed in $\fq(X)$. To do so, we will verify that $R^{-1}(C)\cap F_{w,v}$ is closed in $F_{w,v}$ for all $w,v\in \stm(X\sqcup X^{-1})$.  Recalling that $n$ is fixed, the assumption that $C$ is closed in the quotient topology on $\fq(X)_n$ implies that whenever $|w|+|v|\leq n-1$, the set ${\bf i}_{\bf n}^{-1}(C)\cap F_{w,v}=R^{-1}(C)\cap F_{w,v}$ is closed in $F_{w,v}$. For $w,v$ with $|w|+|v|\geq n$, we proceed by induction on $|w|+|v|$. 

For our induction hypothesis, we suppose $n\leq m$ and that $R^{-1}(C)\cap F_{w,v}$ is closed in $F_{w,v}$ whenever $m-1= |w|+|v|$. Fix $w,v$ such that $|w|+|v|=m$. Let $wx^{\epsilon}v\in F_{w,v}\backslash R^{-1}(C)$. We consider two cases:
\begin{enumerate}
\item Suppose $|R(w)|+|R(v)|=m$. Since $|R(w)|+|R(v)|=|w|+|v|$, we have $R(w)=w$ and $R(v)=v$. Since $X$ is $T_1$, \[U=X\backslash\{a\in X\mid a^{\pm 1}\text{ is a letter of }wv\text{ and }a\neq x\}\] is an open neighborhood of $x$ in $X$. Now $wU^{\epsilon}v$ is an open neighborhood of $wx^{\epsilon}v$ in $F_{w,v}$. Let $wy^{\epsilon}v\in wU^{\epsilon}v$ such that $y\neq x$. Since the letter $y^{\pm 1}$ appears in neither $w$ nor $v$ and since $w$ and $v$ are already reduced, we have $R(wy^{\epsilon}v)=wy^{\epsilon}v$ and $|wy^{\epsilon}v|=m+1\geq n+1$. Since $wy^{\epsilon}v\notin \fq(X)_n$, we conclude $wy^{\epsilon}v\notin C$. Thus, $wU^{\epsilon}v\cap R^{-1}(C)=\emptyset$.

\item Suppose $|R(w)|+|R(v)|<m$. Then $R(R(w)x^{\epsilon}R(v))=R(wx^{\epsilon}v)\notin C$ and thus $R(w)x^{\epsilon}R(v)\in F_{R(w),R(v)}\backslash R^{-1}(C)$. By the induction hypothesis, there exists an open neighborhood $U$ of $x$ in $X$ such that $R(w)U^{\epsilon}R(v)\cap R^{-1}(C)=\emptyset$. Since $X$ is $T_1$, $V=U\backslash\{a\in X\mid a\text{ is a letter of }wv\text{ and }a\neq x\}$ is an open neighborhood of $x$ in $X$. Now $ wV^{\epsilon}v$ is an open neighborhood of $wx^{\epsilon}v$ in $F_{w,v}$. Suppose $wy^{\epsilon}v\in wV^{\epsilon}v$ where $y\neq x$. Since $w$ and $v$ do not have $y^{\pm 1}$ as a letter, we have $R(wy^{\epsilon}v)=R(w)y^{\epsilon}R(v)\in R(w)V^{\epsilon}R(v)\subseteq R(w)U^{\epsilon}R(v)$. Thus $R(wy^{\epsilon}v)\notin C$. This gives $wV^{\epsilon}v\cap R^{-1}(C)=\emptyset$.
\end{enumerate}
From these two cases, we may conclude that $F_{w,v}\cap R^{-1}(C)$ is closed in $F_{w,v}$ when $|w|+|v|=m$, completing the induction.
\end{proof}

We combine the above results with the following.

\begin{theorem}\label{t1theorem}
The following are equivalent:
\begin{enumerate}
\item $X$ is $T_1$,
\item $\fq(X)$ is $T_1$,
\item $\fq(X)_n$ is closed in $X$ for all $n\geq 0$,
\item $\sigma_n:X^{\otimes n}\to \fq(X)$, $\sigma_n(x_1,x_2,\dots,x_n)=x_1x_2\dots x_n$ is a closed embedding for all $n\geq 1$.
\end{enumerate}
\end{theorem}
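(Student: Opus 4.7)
The plan is to establish the cyclic chain $(1)\Rightarrow(3)\Rightarrow(2)\Rightarrow(1)$ together with the separate biconditional $(1)\Leftrightarrow(4)$.

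Three of the implications should flow quickly from tools already at hand. For $(1)\Rightarrow(3)$, I invoke Lemma~\ref{fnclosedlemma} directly. For $(3)\Rightarrow(2)$, I note that $\fq(X)_0=\{e\}$ is closed by hypothesis, and since translations are homeomorphisms in any quasitopological group, every singleton $\{g\}=g\cdot\{e\}$ is closed, so $\fq(X)$ is $T_1$. For $(2)\Rightarrow(1)$, continuity and injectivity of the canonical map $\sigma:X\to\fq(X)$ let me pull back the closed singleton $\{\sigma(x)\}$ to conclude $\{x\}$ is closed in $X$.

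The main new content is $(1)\Rightarrow(4)$. Here I would realize $\sigma_n$ as the restriction of the quotient map ${\bf i_n}$ from Lemma~\ref{restrictedquotientlemma} to the subset $X^{\otimes n}\subseteq(X\sqcup X^{-1})^{\otimes n}$. Two preparatory observations make this work. First, since $X$ is clopen in $X\sqcup X^{-1}$ and $\otimes$ distributes over disjoint unions, $X^{\otimes n}$ is a clopen summand of $(X\sqcup X^{-1})^{\otimes n}$ and hence clopen in $\coprod_{i=0}^{n}(X\sqcup X^{-1})^{\otimes i}$. Second, any word $u$ with $|u|\leq n$ whose reduction lies in $\sigma_n(X^{\otimes n})$ must already be reduced and entirely positive, so ${\bf i_n}^{-1}(\sigma_n(X^{\otimes n}))=X^{\otimes n}$; in particular this subset is saturated. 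The standard restriction-of-quotient-map theorem applied to this clopen saturated subset gives that $\sigma_n$ is a quotient map onto its image in $\fq(X)_n$, and injectivity of $\sigma_n$ promotes this to a homeomorphism. Closedness of $\sigma_n(X^{\otimes n})$ in $\fq(X)_n$ comes from the preimage $X^{\otimes n}$ being closed, and closedness in $\fq(X)$ then follows from Lemma~\ref{fnclosedlemma}.

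The implication $(4)\Rightarrow(1)$ requires only the $n=1$ case. Given that $\sigma_1(X)=X$ is closed in $\fq(X)$, Lemma~\ref{closedingrouplemma} forces $R^{-1}(X)\cap F_{w,v}$ to be closed in $F_{w,v}$ for every $w,v\in \stm(X\sqcup X^{-1})$. The decisive step is to pick the right fiber: for each $y\in X$ I take $w=yy$, $v=e$, and analyze $R(yyz)$ as $z$ ranges over $X\sqcup X^{-1}$. A direct check shows $R(yyz)\in X$ exactly when $z=y^{-1}$ (giving the reduction $yyy^{-1}\to y$), while any other $z$ yields a reduced length-$3$ word. Hence, under the identification $F_{yy,e}\cong X\sqcup X^{-1}$, the intersection $R^{-1}(X)\cap F_{yy,e}$ is the singleton $\{y^{-1}\}$, whose closedness is equivalent to $\{y\}$ being closed in $X$. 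Varying $y$ shows $X$ is $T_1$. The main obstacle here is spotting that the short length-$3$ fiber $F_{yy,e}$ already encodes the full $T_1$ condition; without this observation one is drawn into unnecessarily involved arguments using $\sigma_n$ for $n\geq 2$.
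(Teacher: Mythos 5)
Your proof is correct. The equivalence of (1)--(3) is handled exactly as in the paper (Lemma \ref{fnclosedlemma}, homogeneity, continuity of $\sigma$), and your (1) $\Rightarrow$ (4) is the paper's argument in different packaging: the paper takes a closed $C\subseteq X^{\otimes n}$, notes that ${\bf i}_{\bf n}^{-1}(\sigma_n(C))$ is precisely $C$ sitting inside the summand $X^{\otimes n}$ (the same length-counting observation by which you see that $X^{\otimes n}$ is saturated), and concludes from Lemma \ref{restrictedquotientlemma} and Corollary \ref{inductivelimitcorollary} that $\sigma_n(C)$ is closed; your formulation via restriction of ${\bf i_n}$ to a saturated clopen summand buys an explicit proof of the embedding half, which the paper leaves implicit in ``continuous closed injection.'' The genuine divergence is (4) $\Rightarrow$ (1). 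The paper argues by contraposition: choosing $y\in\overline{\{x\}}\setminus\{x\}$, it shows every neighborhood of the reduced word $xyx^{-1}\notin Im(\sigma_1)$ meets $F_{x,x^{-1}}$ in a set $xVx^{-1}$ with $x\in V$, hence contains $R(xxx^{-1})=x\in Im(\sigma_1)$, so $Im(\sigma_1)$ is not closed. Your direct computation --- that $R^{-1}(Im(\sigma_1))\cap F_{yy,e}=\{yyy^{-1}\}$, so that Lemma \ref{closedingrouplemma} converts closedness of $Im(\sigma_1)$ into closedness of $\{y\}$ --- reaches the same conclusion without naming a non-$T_1$ witness and isolates why a single length-three fiber detects the $T_1$ property; the paper's version makes the failure of closedness more visibly geometric. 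Two small points of hygiene: $R^{-1}(X)$ should read $R^{-1}(\sigma_1(X))$, and you should note that under $F_{yy,e}\cong X\sqcup X^{-1}$ your singleton lands in the clopen copy $X^{-1}$, which is exactly why its closedness there is equivalent to $\{y\}$ being closed in $X$.
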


\begin{proof}
(1) $\Rightarrow$ (3) is Lemma \ref{fnclosedlemma}. (3) $\Rightarrow$ (2) If the trivial subgroup $\fq(X)_0$ is closed, then $\fq(X)$ is $T_1$ since it is a homogeneous space. (2) $\Rightarrow$ (1) holds since the canonical injection $\sigma:X\to \fq(X)$ is continuous. Hence, (1)-(3) are equivalent. 

(1)  $\Rightarrow$ (4). Suppose $X$ is $T_1$ and $C\subseteq X^{\otimes n}$ is closed. By Corollary \ref{inductivelimitcorollary}, it suffices to show that $\sigma_n(C)\cap \fq(X)_m$ is closed in $\fq(X)_m$ for all $m\geq 0$. Recall that ${\bf i_m}:\coprod_{i=0}^{m}(X\sqcup X^{-1})^{\otimes i}\to \fq(X)_m$ is quotient by Lemma \ref{restrictedquotientlemma}. Note that ${\bf i}_{\bf n}^{-1}(\sigma_n(C))$ is precisely the set $C$ in the summand $X^{\otimes n}$ of $\coprod_{i=0}^{n}(X\sqcup X^{-1})^{\otimes i}$. Therefore, ${\bf i}_{\bf n}^{-1}(\sigma_n(C))$ is closed in $\coprod_{i=0}^{n}(X\sqcup X^{-1})^{\otimes i}$. It follows that $\sigma_n(C)$ is closed in $\fq(X)_n$. Since $\sigma_n(C)\subseteq \fq(X)_n$ and each set $\fq(X)_m$ is closed in $\fq(X)$ for all $m\geq 0$ (Lemma \ref{fnclosedlemma}), we have that $\sigma_n(C)\cap \fq(X)_m$ is closed in $\fq(X)_m$ for all $m\geq 0$.

(4) $\Rightarrow$ (1) Suppose $X$ is not $T_1$. We will show that (4) fails because $Im(\sigma_1)$ is not closed. Choose $x,y\in X$ such that $y\in \overline{\{x\}}\backslash\{x\}$. Let $U$ be an open neighborhood of $w=xyx^{-1}$ in $\fq(X)$. Now $R^{-1}(U)\cap F_{x,x^{-1}}=xVx^{-1}$ for some open neighborhood of $y$ in $X$. Since $y\in \overline{\{x\}}$, we have $x\in V$. Thus $xxx^{-1}\in xVx^{-1}\subseteq R^{-1}(U)$ giving $x=R(xxx^{-1})\in U$. Since every open neighborhood of $xyx^{-1}\notin Im(\sigma_1)$ meets $Im(\sigma_1)$, we conclude that $Im(\sigma_1)$ is not closed.
\end{proof}

\begin{proof}[Proof of Theorem \ref{mainthm}]
Parts (1), (2), and (3) follow from Theorem \ref{t1theorem}, Corollary \ref{inductivelimitcorollary}, and Lemma \ref{restrictedquotientlemma} respectively.
\end{proof}

\begin{corollary}
The converse of Corollary \ref{quotientmapcor} holds when $Y$ is $T_1$.
\end{corollary}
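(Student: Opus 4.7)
The plan is to show $f$ is a quotient map by verifying both that $f$ is surjective and that $A\subseteq Y$ is closed in $Y$ whenever $f^{-1}(A)$ is closed in $X$. Surjectivity of $f$ is immediate from surjectivity of $\fq(f)$: for any $y\in Y$, there exists $w\in\fq(X)$ with $\fq(f)(w)=y$, and by uniqueness of the reduced form of $\fq(f)(w)$ in $F(Y)$, the letter $y$ must arise as $f(x)$ for some letter $x$ of $w$.

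For the closed-set criterion, I would translate through three equivalences. Since $Y$ is $T_1$, Theorem \ref{t1theorem} gives that $\sigma_1:Y\to\fq(Y)$ is a closed embedding, so $A$ is closed in $Y$ if and only if $\sigma_1(A)$ is closed in $\fq(Y)$. Because $\fq(f)$ is a quotient map, the latter is equivalent to $\fq(f)^{-1}(\sigma_1(A))$ being closed in $\fq(X)$. By Lemma \ref{closedingrouplemma}, this last condition reduces to checking that $R^{-1}(\fq(f)^{-1}(\sigma_1(A)))\cap F_{w,v}$ is closed in $F_{w,v}\cong X\sqcup X^{-1}$ for every pair of words $w,v\in\stm(X\sqcup X^{-1})$.

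The core of the argument is then a case analysis of when a word $wz^{\epsilon}v\in F_{w,v}$ satisfies $R(f_{\ast}(w)f(z)^{\epsilon}f_{\ast}(v))\in\sigma_1(A)$, where $f_{\ast}=\stm(f\sqcup f^{-1})$. Writing the reduced forms in $F(Y)$ as $R(f_{\ast}(w))=a_1\cdots a_p$ and $R(f_{\ast}(v))=b_1\cdots b_q$, the middle letter $f(z)^{\epsilon}$ either fails to cancel---forcing $p=q=0$, $\epsilon=1$, and $f(z)\in A$---or it cancels with $a_p$, forcing $f(z)^{\epsilon}=a_p^{-1}$, or it cancels with $b_1$, forcing $f(z)^{\epsilon}=b_1^{-1}$. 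In each cancellation branch, whether the subsequent cascading reduction of $a_1\cdots a_{p-1}b_1\cdots b_q$ (or the $b_1$-analogue) terminates in a single letter of $A$ depends only on $w,v$, so that branch contributes either a preimage $f^{-1}(y_0)$ in the appropriate sign component of $X\sqcup X^{-1}$, or nothing. Hence $R^{-1}(\fq(f)^{-1}(\sigma_1(A)))\cap F_{w,v}$ is a finite union of sets of the form $f^{-1}(A)$ (closed by hypothesis) and $f^{-1}(y_0)$ (closed because $\{y_0\}$ is closed in the $T_1$ space $Y$ and $f$ is continuous), hence closed.

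The main obstacle will be the reduction case analysis, which parallels the pattern of Remark \ref{reductionremark}; once that analysis is in hand, the rest of the proof is a direct composition of the closed-embedding and quotient-map equivalences already developed earlier in the paper.
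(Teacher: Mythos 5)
Your proposal is correct, but it takes a genuinely different route from the paper. The paper's proof is a short factorization argument: it introduces $Y'$, the underlying set of $Y$ with the quotient topology induced by $f$, writes $f=i\circ f'$ with $f':X\to Y'$ quotient and $i:Y'\to Y$ a continuous bijection, applies the forward direction (Corollary \ref{quotientmapcor}) to get that $\fq(f')$ is quotient, concludes that $\fq(i)$ is a topological isomorphism because $\fq(f)=\fq(i)\circ\fq(f')$ with both $\fq(f)$ and $\fq(f')$ quotient, and then restricts to the generators via Theorem \ref{t1theorem} (noting $Y'$ is also $T_1$) to see that $i$ itself is a homeomorphism. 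You instead verify the quotient condition for $f$ directly, pulling $\sigma_1(A)$ back through $\fq(f)$ and $R$ to the fibers $F_{w,v}$ and analyzing word reduction. Your chain of equivalences is sound (closed embedding of $\sigma_1$ for the $T_1$ space $Y$, the quotient property of $\fq(f)$, and Lemma \ref{closedingrouplemma}), and the key observation---that once the middle letter $f(z)^{\epsilon}$ cancels against $a_p$ or $b_1$, the remaining cascade is independent of $z$, so each branch contributes either all of $f^{-1}(y_0)$ in the appropriate component or nothing---does make $R^{-1}(\fq(f)^{-1}(\sigma_1(A)))\cap F_{w,v}$ a finite union of sets of the form $f^{-1}(A)$ and $f^{-1}(y_0)$, all closed since $Y$ is $T_1$ and $f^{-1}(A)$ is closed by hypothesis. (One small point to record when writing this up: the two cancellation branches can overlap when $a_p=b_1$, but uniqueness of reduced forms guarantees they then yield the same reduced word, so the union is well defined.) What your approach buys is an explicit description of the preimage sets and independence from the auxiliary space $Y'$; what it costs is a reduction case analysis parallel to Remark \ref{reductionremark} and Lemma \ref{fnclosedlemma} that the paper's more categorical argument avoids entirely.
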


\begin{proof}
Suppose $f:X\to Y$ is a map where $Y$ is $T_1$ and $\fq(f)$ is quotient. Certainly, $f$ must be surjective. Let $Y'$ be the quotient topology on the underlying set of $Y$ inherted from $f$ and $f':X\to Y'$ be the quotient map. By Corollary \ref{quotientmapcor}, $\fq(f)$ is quotient. There is an induced, continuous bijection $i:Y'\to Y$, which induces a continuous group isomorphism $\fq(i):\fq(Y')\to \fq(Y)$ such that $\fq(i)\circ \fq(f')=\fq(f)$. Since both $\fq(f)$ and $\fq(f')$ are quotient, $\fq(i)$ is an isomorphism of quasitopological groups. Since $Y'$ is also $T_1$, Theorem \ref{t1theorem} ensures that the restriction $i=\fq(i)|_{\fq(Y')_1}$ is a homeomorphism. It follows that $f$ is a quotient map.
\end{proof}

\begin{corollary}\label{closedsequencecor}
If $X$ is $T_1$ and $\{w_m\}_{m\in\bbn}$ is a sequence in $\fq(X)$ such that $|w_m|\to\infty$, then $\{w_m\mid m\in\bbn\}$ is closed in $\fq(X)$.
\end{corollary}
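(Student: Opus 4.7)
The plan is to apply the inductive limit description of $\fq(X)$ given in Corollary \ref{inductivelimitcorollary} together with the $T_1$ property established in Theorem \ref{t1theorem}. Writing $S = \{w_m \mid m \in \bbn\}$, I would reduce the problem to showing that $S \cap \fq(X)_n$ is closed in $\fq(X)_n$ for every $n \geq 0$, since that is exactly what is needed for $S$ to be closed in $\fq(X)$.

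For fixed $n$, the intersection $S \cap \fq(X)_n$ consists precisely of those $w_m$ with $|w_m| \leq n$. By hypothesis $|w_m| \to \infty$, so only finitely many indices $m$ satisfy $|w_m| \leq n$, which means $S \cap \fq(X)_n$ is a finite set. Since $X$ is $T_1$, Theorem \ref{t1theorem} gives that $\fq(X)$ is $T_1$, and therefore so is the subspace $\fq(X)_n$. Every finite subset of a $T_1$ space is closed, so $S \cap \fq(X)_n$ is closed in $\fq(X)_n$.

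Since this holds for every $n$, Corollary \ref{inductivelimitcorollary} yields that $S$ is closed in $\fq(X)$. There is no genuine obstacle here; the only thing to be careful about is correctly invoking the two previous results (the weak-topology characterization and the $T_1$ conclusion) to package the argument in a single sentence of reasoning after the finiteness observation.
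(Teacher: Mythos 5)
Your proposal is correct and follows exactly the paper's own argument: the finiteness of $S\cap \fq(X)_n$ from $|w_m|\to\infty$, the $T_1$ property of $\fq(X)$ from Theorem \ref{t1theorem} to get closedness of each finite intersection, and Corollary \ref{inductivelimitcorollary} to conclude. No differences worth noting.
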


\begin{proof}
By assumption, the set $\{w_m\mid m\in\bbn\}\cap \fq(X)_n$ is finite for all $n\in\bbn$. Since $\fq(X)$ is $T_1$ by Theorem \ref{t1theorem}, $\{w_m\mid m\in\bbn\}\cap \fq(X)_n$ is closed in $\fq(X)_n$ for all $n\in\bbn$. It follows from Corollary \ref{inductivelimitcorollary} that $\{w_m\mid m\in\bbn\}$ is closed in $\fq(X)$.
\end{proof}

\begin{corollary}
If $X$ is $T_1$, then $\fq(X)$ is hemicompact with respect to the subsets $\fq(X)_n$, $n\in\bbn$.
\end{corollary}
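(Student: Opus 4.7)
The plan is to prove hemicompactness by contradiction: suppose a compact set $K \subseteq \fq(X)$ fails to be contained in any $\fq(X)_n$, and deduce that $K$ must contain an infinite closed discrete subspace, contradicting compactness of $K$.

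First I would pick, for each $n \in \bbn$, a word $w_n \in K$ with $|w_n| \geq n$. This immediately gives $|w_n| \to \infty$, and it ensures that the set $A = \{w_n : n \in \bbn\}$ is infinite, since $A \cap \fq(X)_N \subseteq \{w_1,\dots,w_N\}$ for every $N$.

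The key step is to show that every subset $B \subseteq A$ is closed in $\fq(X)$. If $B$ is finite, closedness is immediate from Theorem \ref{t1theorem}, which gives that $\fq(X)$ is $T_1$. If $B$ is infinite, then since $B \cap \fq(X)_N$ is finite for each $N$, I can enumerate $B$ as a sequence $(v_m)_{m \in \bbn}$ with $|v_m| \to \infty$ (for example, by ordering its elements by nondecreasing length), and Corollary \ref{closedsequencecor} then yields that $B$ is closed in $\fq(X)$. In particular, $A$ itself is closed, and the fact that every subset of $A$ is closed in $\fq(X)$ forces $A$ to carry the discrete subspace topology.

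To conclude, $A$ is a closed subspace of the compact set $K$, hence compact; but a discrete compact space is finite, contradicting the infinitude of $A$. Therefore $K \subseteq \fq(X)_n$ for some $n$, as required. I do not anticipate a serious obstacle here: Corollary \ref{closedsequencecor} is the workhorse, and the only mildly delicate point is arranging the enumeration of $B$ so that its lengths tend to infinity, which is automatic from the length-bound property of $A$.
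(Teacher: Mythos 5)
Your proof is correct, and it takes a somewhat different route from the paper's. The paper also argues by contradiction from a sequence $\{w_m\}$ in $K$ with $|w_m|\to\infty$, but it then extracts a \emph{convergent} subsequence $w_{m_j}\to w\in K$ and derives the contradiction by noting that the tail $\{w_{m_j}\mid j\geq j_0\}$ with $|w_{m_j}|>|w|$ is closed by Corollary \ref{closedsequencecor}, forcing $w$ to lie in it. You instead show that the set $A=\{w_n\mid n\in\bbn\}$ is an infinite subspace all of whose subsets are closed in $\fq(X)$ (finite ones by $T_1$-ness, infinite ones by Corollary \ref{closedsequencecor} after reordering by length), hence a closed discrete subspace of $K$, which cannot be compact. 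Both arguments lean on Corollary \ref{closedsequencecor} as the workhorse, so the essential input is the same. Your version has a genuine advantage: compactness does not in general yield convergent subsequences (only cluster points), so the paper's extraction step is not justified as written and would need to be repaired by working with a cluster point of the sequence; your closed-discrete-subspace argument sidesteps this entirely and is valid for arbitrary compact $K$. The only point worth making fully explicit is why $A$ is infinite; this follows since a finite $A$ would lie in some $\fq(X)_N$, contradicting $|w_{N+1}|\geq N+1$.
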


\begin{proof}
Let $K\subseteq \fq(X)$ be compact and suppose, to obtain a contradiction, there exists a sequence $\{w_m\}_{m\in\bbn}$ in $K$ with $|w_m|\to\infty$. Since $K$ is compact there exists a subsequence $\{w_{m_j}\}_{j\in\bbn}$ that converges to $w\in K$. Since $|w_{m_j}|\to\infty$, we may find $j_0$ such that $|w_{m_j}|>|w|$ for all $j\geq j_0$. However, $\{w_{m_j}\mid j\geq j_0\}$ is closed in $\fq(X)$ by the previous corollary. Therefore, $w\in \{w_{m_j}\mid j\geq j_0\}$; a contradiction.
\end{proof}

The next proposition states that in non-trivial situations, open neighborhoods of the identity in $\fq(X)$ will always contain words of arbitrary length.

\begin{proposition}\label{arblargeprop}
Suppose $X$ is $T_1$ and not discrete. If $U$ is an open neighborhood of $e$ in $\fq(X)$, then $U\cap (\fq(X)_{2m}\backslash \fq(X)_{2m-1})\neq\emptyset$ for all $m\in\bbn$.
\end{proposition}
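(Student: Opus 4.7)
The plan is to fix an open neighborhood $U$ of $e$ in $\fq(X)$ and construct, step by step, a reduced word of length exactly $2m$ that lies in $U$, by iteratively perturbing the word $w_0 = (xx^{-1})^m$, which reduces to $e$ and therefore lies in $R^{-1}(U)$. Since $X$ is $T_1$ but not discrete, fix a non-isolated point $x \in X$; then every open neighborhood of $x$ in $X$ contains some $y \neq x$. The goal is to produce $y_1,\dots,y_m \in X\setminus\{x\}$ and intermediate length-$2m$ words
\[
w_k \;=\; (xy_1^{-1})(xy_2^{-1})\cdots(xy_k^{-1})(xx^{-1})^{m-k}, \qquad k=0,1,\dots,m,
\]
all of which lie in $R^{-1}(U)$.

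The induction step replaces the $(2k)$-th letter of $w_{k-1}$. Set $p_k = xy_1^{-1}\cdots xy_{k-1}^{-1}x$ and $s_k = (xx^{-1})^{m-k}$, so that the fiber $F_{p_k,s_k}$ is homeomorphic to $X\sqcup X^{-1}$ and $w_{k-1}=p_k x^{-1} s_k$. Since $w_{k-1}\in R^{-1}(U)$ and $R^{-1}(U)$ is open (Lemma \ref{closedingrouplemma}), there is an open $V_k\subseteq X\sqcup X^{-1}$ with $x^{-1}\in V_k$ and $p_k V_k s_k \subseteq R^{-1}(U)$. The intersection $V_k \cap X^{-1}$ has the form $W_k^{-1}$ for some open $W_k \ni x$ in $X$, and since $x$ is not isolated we may pick $y_k \in W_k$ with $y_k \neq x$. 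Then $w_k = p_k y_k^{-1} s_k \in R^{-1}(U)$, completing the inductive step.

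Finally, observe that $w_m = xy_1^{-1}xy_2^{-1}\cdots xy_m^{-1}$ is reduced: its consecutive letter pairs are either $(x, y_k^{-1})$ or $(y_k^{-1}, x)$, and neither cancels because $y_k \neq x$. Thus $R(w_m) = w_m$ has length exactly $2m$, and $w_m \in U$, witnessing $w_m \in U \cap (\fq(X)_{2m}\setminus \fq(X)_{2m-1})$.

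The main obstacle is simply bookkeeping: one must keep track of which letter is being perturbed in which fiber and verify that each perturbation lands inside $R^{-1}(U)$. The essential inputs are (i) the fiberwise characterization of the topology on $\stm(X\sqcup X^{-1})$ afforded by the cross topology (Lemma \ref{closedingrouplemma}), which gives openness along each one-letter fiber, and (ii) the existence of non-isolated $x\in X$, which provides the freedom at every step to choose a new letter $y_k \neq x$.
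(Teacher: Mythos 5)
Your proof is correct and follows essentially the same strategy as the paper: iteratively perturb the letters of $(xx^{-1})^m$ inside $R^{-1}(U)$ using the fiberwise openness supplied by the cross topology, then read off a reduced word of length $2m$. The only (harmless, arguably cleaner) difference is that you replace just the $m$ inverse letters, so reducedness needs only $y_k\neq x$, whereas the paper replaces all $2m$ letters and must additionally arrange that consecutive choices are distinct.
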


\begin{proof}
Let $x\in X$ be a non-isolated point of $X$ and let $m\in\bbn$ be arbitrary. Since $R((xx^{-1})^m)=e$, $R^{-1}(U)$ is an open neighborhood of $(xx^{-1})^m$ in $\stm(X\sqcup X^{-1})$. Find an open neighborhood $V_1$ of $x$ such that $V_1x^{-1}(xx^{-1})^{m-1}\subseteq R^{-1}(U)$. Since $x$ is not isolated, we may find $y_1\in V_1\backslash \{x\}$. Since $y_1x^{-1}(xx^{-1})^{m-1}\in R^{-1}(U)$, we may find an open neighborhood $V_2$ of $x$ such that $y_1\notin V_2$ and $y_1V_{2}^{-1}(xx^{-1})^{m-1}\subseteq R^{-1}(U)$. Find $y_2\in V_2$ such that $y_2\neq x$. Then $y_1y_{2}^{-1}(xx^{-1})^{m-1}\in R^{-1}(U)$. Continuing in this way, we may construct a reduced word $w=y_1y_{2}^{-1}y_3y_{4}^{-1}\dots y_{2m-1}y_{2m}^{-1}\in R^{-1}(U)$. Therefore, $w=R(w)\in U$ and $|w|=2m$. 
\end{proof}

The next theorem shows that $\fq(X)$ is only a topological group in trivial situations.

\begin{theorem}\label{discretetheorem}
For any $T_1$ space $X$, the following are equivalent.
\begin{enumerate}
\item $X$ is discrete,
\item $\fq(X)$ is discrete,
\item $\fq(X)$ is a topological group,
\item the square map $sq:\fq(X)\to \fq(X)$, $w\mapsto R(ww)$ is continuous,
\item $\fq(X)$ is first countable.
\end{enumerate}
\end{theorem}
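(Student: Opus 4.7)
The plan is to prove the chain $(1) \Rightarrow (2) \Rightarrow (3) \Rightarrow (4) \Rightarrow (1)$ together with $(2) \Rightarrow (5) \Rightarrow (1)$. The routine implications follow from general principles: $(1) \Rightarrow (2)$ because the cross topology on a discrete space coincides with the (discrete) product topology, so $\stm(X \sqcup X^{-1})$ is discrete and therefore so is its quotient $\fq(X)$; $(2) \Rightarrow (3)$ because any discrete group is a topological group; $(3) \Rightarrow (4)$ because in a topological group the square map factors as the continuous composite of the diagonal with multiplication; and $(2) \Rightarrow (5)$ is trivial.

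For $(4) \Rightarrow (1)$, I would exploit the interaction of the assumed continuity of $sq$ with the closed embedding $\sigma_2$ from Theorem \ref{t1theorem}(4) and the discreteness of the diagonal from Remark \ref{discreteremark}. The continuous composition $sq \circ \sigma : X \to \fq(X)$ sends $x \mapsto R(xx) = x\cdot x = \sigma_2(x,x)$, hence has image contained in $\sigma_2(\Delta)$ where $\Delta \subseteq X^{\otimes 2}$ is the diagonal. Composing with the continuous inverse $\sigma_2^{-1}$ gives a continuous map $X \to X^{\otimes 2}$, $x \mapsto (x,x)$, landing in $\Delta$. Since $\Delta$ is discrete in $X^{\otimes 2}$ (as $X$ is $T_1$), this continuous bijection onto $\Delta$ forces every singleton in $X$ to be open, so $X$ is discrete.

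For $(5) \Rightarrow (1)$, I would argue by contradiction. Suppose $X$ is not discrete, and let $\{U_n\}_{n \in \bbn}$ be a decreasing countable neighborhood basis at $e$ in $\fq(X)$ provided by first countability. Proposition \ref{arblargeprop} then lets me pick $w_n \in U_n$ with $|w_n| = 2n$. By the basis property, $w_n \to e$; on the other hand, $|w_n| \to \infty$, so Corollary \ref{closedsequencecor} says $\{w_n \mid n \in \bbn\}$ is closed in $\fq(X)$. These two facts are incompatible since $e \notin \{w_n \mid n \in \bbn\}$, yielding a contradiction.

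I expect the main obstacle to be $(4) \Rightarrow (1)$: one must recover discreteness of the whole space $X$ from the continuity of the single self-map $sq$ of $\fq(X)$. The argument works essentially because the diagonal of $X \otimes X$ is such a highly discrete object in the cross topology (Remark \ref{discreteremark}), and squaring a length-one letter sends $X$ into exactly this diagonal via the closed embedding $\sigma_2$. Once that observation is in place, the remaining implications proceed by formal or length-counting arguments from earlier in this section.
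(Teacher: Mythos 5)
Your proposal is correct and follows essentially the same route as the paper: the cycle $(1)\Rightarrow(2)\Rightarrow(3)\Rightarrow(4)\Rightarrow(1)$ plus $(2)\Rightarrow(5)\Rightarrow(1)$, with $(4)\Rightarrow(1)$ obtained by identifying $sq\circ\sigma$ with the diagonal map into $X^{\otimes 2}$ (using the embedding $\sigma_2$ from Theorem \ref{t1theorem} and the discreteness of the diagonal from Remark \ref{discreteremark}), and $(5)\Rightarrow(1)$ by combining Proposition \ref{arblargeprop} with Corollary \ref{closedsequencecor}. No substantive differences from the paper's argument.
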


\begin{proof}
The implications (1) $\Leftrightarrow$ (2) $\Rightarrow$ (3) $\Rightarrow$ (4) are clear. To prove, (4) $\Rightarrow$ (1) suppose $sq:\fq(X)\to \fq(X)$ is continuous. Recall from Theorem \ref{t1theorem} that $\sigma_1$ and $\sigma_2$ are embeddings. The restriction of $sq$ to $Im(\sigma_1)$ may be identified with the diagonal map $X\mapsto X\otimes X$. However, as mentioned in Remark \ref{discreteremark}, the diagonal of $X\otimes X$ is a discrete subspace. Since $X$ continuously injects into a discrete space, it must be discrete. This completes the equivalence of (1)-(4). (2) $\Rightarrow$ (5) is clear. To complete the proof, we verify (5) $\Rightarrow$ (1). Suppose, to obtain a contradiction, that $\fq(X)$ is first countable and not discrete. Since $X$ is $T_1$, $\fq(X)$ is $T_1$ by Theorem \ref{t1theorem}. Find a countable basis $U_1\supseteq U_2\supseteq U_3\supseteq \cdots$ in $\fq(X)$ at $e$. According to Proposition \ref{arblargeprop}, we may find $w_n\in U_n$ such that $|w_n|\geq n$. Since $\{U_n\}_{n\in\bbn}$ is a neighborhood base at $e$, we must have $\{w_n\}_{n\in\bbn}\to e$ in $\fq(X)$. However, since $|w_n|\to\infty$, the set $\{w_n\mid n\in\bbn\}$ is closed in $\fq(X)$ by Corollary \ref{closedsequencecor}. Therefore, $e\in \{w_n\mid n\in\bbn\}$; a contradiction.
\end{proof}

It is a well-known theorem of R. Ellis \cite{ellis} that every locally compact Hausdorff quasitopological group is a topological group. Hence, free quasitopological groups are only locally compact in trivial situations.

\begin{corollary}
If $\fq(X)$ is Hausdorff and non-discrete, then $\fq(X)$ is neither first countable nor locally compact.
\end{corollary}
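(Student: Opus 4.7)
The plan is to combine the equivalences in Theorem \ref{discretetheorem} with Ellis's theorem cited just above the corollary. First I would observe that since $\fq(X)$ is Hausdorff, it is in particular $T_1$, and by the equivalence (1)$\Leftrightarrow$(2) in Theorem \ref{t1theorem} this forces $X$ to be $T_1$ as well; moreover, the hypothesis that $\fq(X)$ is non-discrete combined with the equivalence (1)$\Leftrightarrow$(2) in Theorem \ref{discretetheorem} tells us $X$ is non-discrete.

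For the first-countability assertion, I would simply invoke the equivalence (1)$\Leftrightarrow$(5) in Theorem \ref{discretetheorem}: since $X$ is a $T_1$ non-discrete space, $\fq(X)$ cannot be first countable. For local compactness, I would argue by contradiction: if $\fq(X)$ were locally compact, then being a Hausdorff quasitopological group, Ellis's theorem would imply $\fq(X)$ is a topological group. But then by (1)$\Leftrightarrow$(3) in Theorem \ref{discretetheorem}, $X$ would have to be discrete, which would force $\fq(X)$ to be discrete by (1)$\Leftrightarrow$(2), contradicting our hypothesis.

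Since each step is an immediate appeal to a previously established equivalence together with one citation, there is no real obstacle to overcome. The only subtlety to make sure of is that Hausdorffness of $\fq(X)$ passes down to $X$ via the $T_1$ reduction, so that Theorem \ref{discretetheorem} actually applies. Consequently this corollary is essentially a packaging of the two named results.
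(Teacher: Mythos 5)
Your proof is correct and is precisely the argument the paper intends: the corollary is stated without proof immediately after Theorem \ref{discretetheorem} and the citation of Ellis's theorem, and your combination of the equivalences (1)$\Leftrightarrow$(2)$\Leftrightarrow$(3)$\Leftrightarrow$(5) with Ellis's theorem is the implied reasoning. Your check that Hausdorffness of $\fq(X)$ forces $X$ to be $T_1$ (via Theorem \ref{t1theorem}), so that Theorem \ref{discretetheorem} applies, is a worthwhile detail the paper leaves implicit.
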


It is possible for quasitopological groups, which are not topological groups, to be constructed as quotients of topological monoids. This is only possible for free quasitopological groups in trivial situations.

\begin{corollary}
If $X$ is $T_1$ and non-discrete, then there does not exist a topological monoid $M$ and a monoid epimorphism $M\to \fq(X)$, which is a topological quotient map.
\end{corollary}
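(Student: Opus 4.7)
The plan is to argue by contradiction. Suppose $q:M\to\fq(X)$ is a monoid epimorphism that is also a topological quotient map, with $M$ a topological monoid. I will show that this forces the square map $sq:\fq(X)\to\fq(X)$, $w\mapsto R(ww)$, to be continuous; Theorem \ref{discretetheorem} then forces $X$ to be discrete, contradicting the hypothesis.

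First, because $M$ is a genuine topological monoid (not merely semitopological), its multiplication $\mu_M:M\times M\to M$ is jointly continuous on the ordinary product topology. Composing with the continuous diagonal $M\to M\times M$, $m\mapsto(m,m)$, yields that the squaring map $s:M\to M$, $s(m)=m\cdot m$, is continuous.

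Next, using that $q$ is a monoid homomorphism, I verify the identity $sq\circ q=q\circ s$: for any $m\in M$,
\[
sq(q(m))=R(q(m)q(m))=q(m)\cdot q(m)=q(m\cdot m)=q(s(m)),
\]
where the middle product is the group operation in $\fq(X)$. Since $q$ and $s$ are continuous, so is $q\circ s$, and therefore so is $sq\circ q$. As $q$ is a surjective quotient map, the universal property of quotient maps forces $sq:\fq(X)\to\fq(X)$ itself to be continuous. The implication (4) $\Rightarrow$ (1) of Theorem \ref{discretetheorem} then yields that $X$ is discrete, a contradiction.

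There is no substantive technical obstacle; the argument is structural. The point to emphasize is conceptual: joint continuity of multiplication is precisely the feature separating topological monoids from semitopological ones, and it is exactly strong enough to make the squaring map continuous after descent through a quotient. Since Theorem \ref{discretetheorem} identifies continuity of $sq$ with discreteness of $X$, no such topological-monoid presentation of $\fq(X)$ can exist in the non-discrete $T_1$ case.
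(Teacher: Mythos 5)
Your proof is correct and follows essentially the same route as the paper: use joint continuity of multiplication in the topological monoid $M$ to get continuity of the squaring map on $M$, push it through the quotient map to conclude that $sq$ is continuous on $\fq(X)$, and invoke Theorem \ref{discretetheorem} for the contradiction. The only difference is that you spell out the descent-through-the-quotient step, which the paper leaves implicit.
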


\begin{proof}
If there existed such a quotient map $M\to \fq(X)$, then the continuous square map $M\to M$, $x\mapsto xx$ would imply that the square map $\fq(X)\mapsto \fq(X)$ is continuous; a contradiction of Theorem \ref{discretetheorem}.
\end{proof}

To complete this section, we remark on separation axioms. Recall that a space $X$ is \textit{functionally Hausdorff} if whenever $a,b\in X$ and $a\neq b$, there exists a continuous function $f:X\to\bbr$ such that $f(a)\neq f(b)$. It is a theorem of B.V.S. Thomas that a space $X$ is functionally Hausdorff if and only if $\fm(X)$ is Hausdorff \cite[Theorem 0.1]{Thomas}. Hence, if $X=\bbr^2$ with the irrational-slope topology, then $X$ is Hausdorff but $\fm(X)$ is not. One might suspect that since $\fq(X)$ has a much finer topology and a simpler ``explicit" construction that the analogous statement might be true for free quasitopological groups. Unfortunately, some difficulty remains in determining when $\fq(X)$ is Hausdorff. This difficulty appears to stem from the lack of a convenient basis for the cross topology.

\begin{problem}\label{hausdorffproblem}
If $X$ is Hausdorff, must $\fq(X)$ be Hausdorff?
\end{problem}

Nevertheless, we can apply Thomas' theorem to obtain a characterization for the functionally Hausdorff property.

\begin{theorem}
A space $X$ is functionally Hausdorff if and only if $\fq(X)$ is functionally Hausdorff.
\end{theorem}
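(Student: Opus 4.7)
The plan is to prove both directions by leveraging Thomas' theorem together with the observation from Remark \ref{topgrpremark} that $\fm(X)\cong \tau(\fq(X))$ is the topological reflection of $\fq(X)$, so the identity of underlying groups gives a continuous homomorphism $j:\fq(X)\to \fm(X)$.

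For the easier direction $(\Leftarrow)$, suppose $\fq(X)$ is functionally Hausdorff. Then in particular it is $T_1$, so by Theorem \ref{t1theorem}, $X$ is $T_1$ and the inclusion $\sigma:X\to \fq(X)$ is an embedding. Given distinct $a,b\in X$, we have $\sigma(a)\neq \sigma(b)$, so there is a continuous function $F:\fq(X)\to\bbr$ with $F(\sigma(a))\neq F(\sigma(b))$. Then $F\circ\sigma:X\to\bbr$ separates $a$ and $b$, showing $X$ is functionally Hausdorff.

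For $(\Rightarrow)$, suppose $X$ is functionally Hausdorff. By Thomas' theorem, $\fm(X)$ is Hausdorff. Since $\fm(X)$ is a Hausdorff topological group, it is a Tychonoff space (a standard fact about topological groups), and hence functionally Hausdorff. Now by Remark \ref{topgrpremark}, $\fm(X)$ is the underlying group of $\fq(X)$ equipped with a coarser topology, and the identity homomorphism $j:\fq(X)\to \fm(X)$ is continuous. Given distinct $a,b\in \fq(X)$, we have $j(a)\neq j(b)$ in $\fm(X)$, so there is a continuous $F:\fm(X)\to\bbr$ with $F(j(a))\neq F(j(b))$. Then $F\circ j:\fq(X)\to\bbr$ is continuous and separates $a$ and $b$, proving that $\fq(X)$ is functionally Hausdorff.

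Neither direction presents a serious obstacle; the only subtle point is the application of the standard result that Hausdorff topological groups are Tychonoff, which bridges from the Hausdorff conclusion of Thomas' theorem to the functional separation property needed to pull back along $j$. This is in sharp contrast to Problem \ref{hausdorffproblem}, where the absence of a tractable basis for the cross topology prevents an analogous argument for Hausdorffness alone.
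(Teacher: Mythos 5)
Your proof is correct and follows essentially the same route as the paper: Thomas' theorem plus the fact that Hausdorff topological groups are Tychonoff, pulled back along the continuous identity homomorphism $\fq(X)\to\fm(X)$, with the converse via the continuous injection $\sigma$. (The appeal to Theorem \ref{t1theorem} in the easy direction is unnecessary --- continuity and injectivity of $\sigma$ suffice --- but this is harmless.)
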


\begin{proof}
Since $\sigma:X\to \fq(X)$ is a continuous injection, one direction is clear. If $X$ is functionally Hausdorff, then $\fm(X)$ is Hausdorff by Thomas' theorem. Since all Hausdorff topological groups are Tychonoff, $\fm(X)$ is functionally Hausdorff. Additionally, the topology of $\fq(X)$ is finer than that of $\fm(X)$. It follows that $\fq(X)$ is functionally Hausdorff.
\end{proof}

\section{Free quasitopological groups on subspaces}\label{sectionsubspaces}

When $Y\subseteq X$, the inclusion $Y\to X$ induces a continuous injection $\fq(Y)\to \fq(X)$. It is natural to ask when this is an embedding, i.e. when the subspace topology on $F(Y)$ inherited from $\fq(X)$ agrees with the free quasitopological group topology. This problem has been studied extensively for free topological groups. The following characterization is due to Sipacheva \cite[Theorem 1]{Sipachevasubgroups} who made use of previous work by Pestov \cite{pestovsubgroup}: if $X$ is a Tychonoff space, then the inclusion $Y\to X$ induces an embedding $\fm(Y)\to \fm(X)$ if and only if every bounded continuous pseudometric on $Y$ can be extended to a continuous pseudometric on $X$. Using our characterization of the topology of free quasitopological groups from the previous section, we prove Theorem \ref{mainthm2}, which shows that the situation for free quasitopological groups is far simpler. 

\begin{proof}[Proof of Theorem \ref{mainthm2}]
Let $f:\fq(Y)\to\fq(X)$ denote the continuous homomorphism induced by the inclusion $Y\to X$. If $Y$ is not closed in $X$, then $f(\fq(Y)_1)=f(\fq(Y))\cap \fq(X)_1$ is not closed in $\fq(X)_1$. Hence, $f(\fq(Y))$ is not closed in $\fq(X)$.

For the converse, suppose $Y$ is closed in $X$. Note that by Lemma \ref{fnclosedlemma} and Corollary \ref{inductivelimitcorollary}, it suffices to show that the restriction $f_n:\fq(Y)_n\to \fq(X)_n$ is a closed embedding for all $n\geq 0$. By Theorem \ref{t1theorem}, this is clear for $n=0$ and $n=1$. Fix $n\geq 2$. 

First, we show that the image $Im(f_n)$ is closed in $\fq(X)_n$. Since ${\bf i_n}$ is quotient, we do so by proving that ${\bf i}_{\bf n}^{-1}(Im(f_n))\cap F_{w,v}$ is closed in $F_{w,v}$ whenever $|w|+|v|\leq n-1$. Suppose $wx^{\epsilon}v\in F_{w,v}\backslash {\bf i}_{\bf n}^{-1}(Im(f_n))$. Then ${\bf i_n}(wx^{\epsilon}v)$ must contain a letter from $X\backslash Y$. If $x\in Y$, let $U=X\backslash \{a\in X\mid a\text{ is a letter of }wv\text{ and }a\neq x\}$. If $x\in X\backslash Y$, let $U=(X\backslash Y)\backslash \{a\in X\mid a\text{ is a letter of }wv\text{ and }a\neq x\}$. In either case, $wU^{\epsilon}v$ is an open neighborhood of $wx^{\epsilon}v$ and if $wz^{\epsilon}v\in wU^{\epsilon}v$, then ${\bf i_n}(wz^{\epsilon}v)$ contains a letter from $X\backslash Y$. Thus $wU^{\epsilon}v\cap {\bf i}_{\bf n}^{-1}(Im(f_n))\cap F_{w,v}=\emptyset$. This completes the proof that $Im(f_n)$ is closed in $\fq(X)_n$.

We now prove $f_n$ is a closed embedding for fixed $n\geq 2$. Suppose $C\subseteq \fq(Y)_n$ is closed. Since ${\bf i_n}$ is quotient and $Im(f_n)$ is closed, it suffices to show that ${\bf i}_{\bf n}^{-1}(f_n(C))$ is closed in ${\bf i}_{\bf n}^{-1}(Im(f_n))$. Fix words $w,v$ in $\stm(X\sqcup X^{-1})$ with $|w|+|v|\leq n-1$. We will show that ${\bf i}_{\bf n}^{-1}(f_n(C))\cap F_{w,v}$ is closed in ${\bf i}_{\bf n}^{-1}(Im(f_n))\cap F_{w,v}$. Suppose $wx^{\epsilon}v\in ({\bf i}_{\bf n}^{-1}(Im(f_n))\cap F_{w,v})\backslash {\bf i}_{\bf n}^{-1}(f_n(C))$. Then all letters of $R(wx^{\epsilon}v)$ are from $Y$. We consider two possible cases for the letter $x$.

First, suppose $x\in X\backslash Y$. Let $U=(X\backslash Y)\backslash \{a\in X\mid a\text{ is a letter of }wv\text{ and }a\neq x\}$ and consider $wU^{\epsilon}v$. If $wz^{\epsilon}v\in wU^{\epsilon}v\cap {\bf i}_{\bf n}^{-1}(Im(f_n))$, then the definition of $U$ and the fact that $x^{\pm 1}$ must appear an odd number of times in $wv$, ensures that $z=x$. Since, $wx^{\epsilon}v\notin {\bf i}_{\bf n}^{-1}(f_n(C))$, we have $wU^{\epsilon}v\cap {\bf i}_{\bf n}^{-1}(f_n(C))=\emptyset$.

Lastly, suppose $x\in Y$. Then the reduced words ${\bf i_n}(w)$ and ${\bf i_n}(v)$ must contain only letters from $Y$. Thus ${\bf i_n}(w)x^{\epsilon}{\bf i_n}(v)$ lies in the open set $F_{{\bf i_n}(w),{\bf i_n}(v)}\backslash {\bf i}_{\bf n}^{-1}(C)$ and so there exists an open neighborhood $W$ of $x$ in $Y$ such that ${\bf i_n}(w)W^{\epsilon}{\bf i_n}(v)\cap {\bf i}_{\bf n}^{-1}(C)=\emptyset$. Find an open neighborhood $U$ of $x$ in $X$ such that $U\cap Y\subseteq W$ and $U\cap \{a\in X\mid a\text{ is a letter of }wv\text{ and }a\neq x\}=\emptyset$. Consider the neighborhood $wU^{\epsilon}v\cap {\bf i}_{\bf n}^{-1}(Im(f_n))$ of $wx^{\epsilon}v$ in $F_{w,v}\cap {\bf i}_{\bf n}^{-1}(Im(f_n))$. Suppose $wz^{\epsilon}v\in wU^{\epsilon}v$ with $z\neq x$. Then ${\bf i_n}(wz^{\epsilon}v)={\bf i_n}(w)z^{\epsilon}{\bf i_n}(v)$. Since we must have ${\bf i_n}(wz^{\epsilon}v)\in Im(f_n)$, we have $z\in Y$. Thus $z\in W$. We have ${\bf i_n}(w)z^{\epsilon}{\bf i_n}(v)\in {\bf i_n}(w)W^{\epsilon}{\bf i_n}(v)$ and thus ${\bf i_n}(w)z^{\epsilon}{\bf i_n}(v)\notin {\bf i}_{\bf n}^{-1}(C)$. It follows that $f_n({\bf i_n}(w)z^{\epsilon}{\bf i_n}(v))={\bf i_n}(wz^{\epsilon}v)\notin f_n(C)$. We conclude that $wU^{\epsilon}v\cap {\bf i}_{\bf n}^{-1}(Im(f_n))\subseteq {\bf i}_{\bf n}^{-1}(Im(f_n))\backslash {\bf i}_{\bf n}^{-1}(f_n(C))$.
\end{proof}

If we remove the condition in the statement of Theorem \ref{mainthm2} that the embedding $\fq(Y)\to\fq(Y)$ have \textit{closed} image, then a full characterization is less clear. However, we are able to identify a satisfactory partial solution.

\begin{proposition}\label{embeddingprop}
Suppose that $X$ is Hausdorff. If the inclusion $Y\to X$ induces an embedding $f:\fq(Y)\to \fq(X)$ of free quasitopological groups, then $Y$ is sequentially closed in $X$.
\end{proposition}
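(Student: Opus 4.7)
The plan is to prove the contrapositive: assuming $Y$ is not sequentially closed in $X$, I will exhibit a set $A\subseteq \fq(Y)$ that witnesses the failure of $f$ to be an embedding. So take a sequence $\{y_n\}_{n\in\bbn}$ in $Y$ with $y_n\to x\in X\setminus Y$. Because $X$ is Hausdorff, $x$ is the unique limit of the sequence and lies outside $Y$; after passing to a subsequence I may assume the $y_n$ are pairwise distinct (and automatically $y_n\neq x$ for every $n$). My candidate will be the length-two set
\[
A=\{y_ny_m^{-1}\in\fq(Y)\mid n,m\in\bbn,\ n\neq m\}\subseteq \fq(Y)_2,
\]
and the strategy is to show $A$ is closed in $\fq(Y)$ while $e\in \overline{f(A)}^{\fq(X)}$, which (since $e\in f(\fq(Y))\setminus f(A)$) forces $f(A)$ to fail to be closed in the subspace topology on $f(\fq(Y))$.

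To verify that $A$ is closed in $\fq(Y)$ I will combine Lemma \ref{fnclosedlemma} with Lemma \ref{restrictedquotientlemma}, reducing the problem to checking that ${\bf i_2}^{-1}(A)=\{(y_n,y_m^{-1})\mid n\neq m\}$ is closed in $\coprod_{i=0}^{2}(Y\sqcup Y^{-1})^{\otimes i}$ and in fact in the summand $Y\otimes Y^{-1}$. By the characterization of closed sets in the cross topology, it suffices to look at each projection fiber, and I will observe that each fiber meets this set either in a singleton, in the empty set, or (in the fibers $\{y_k\}\times Y^{-1}$ and $Y\times\{y_k^{-1}\}$) in a translate of $\{y_m:m\neq k\}$. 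Hausdorffness of $X$ ensures that the sequence $\{y_m\}$ has $x\notin Y$ as its only limit in $X$, so $\{y_m:m\neq k\}$ has no limit points in $Y$ and is therefore closed in the $T_1$ space $Y$.

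The crux of the argument, and what I expect to be the main obstacle, is to show $e\in\overline{f(A)}^{\fq(X)}$. I will argue instead that $(x,x^{-1})\in\overline{S'}^{X\otimes X^{-1}}$, where $S'=\{(y_n,y_m^{-1})\mid n\neq m\}$; since the continuous quotient ${\bf i_2}$ for $X$ sends $(x,x^{-1})$ to $R(xx^{-1})=e$, it will follow that $e\in\overline{f(A)}^{\fq(X)_2}\subseteq \overline{f(A)}^{\fq(X)}$. Given an arbitrary open neighborhood $U$ of $(x,x^{-1})$ in the cross topology, the fiber $\{x\}\times X^{-1}$ meets $U$ in an open neighborhood of $x^{-1}$ in $X^{-1}$, so the convergence $y_m^{-1}\to x^{-1}$ yields an index $m_0$ with $(x,y_{m_0}^{-1})\in U$. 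Then the perpendicular fiber $X\times\{y_{m_0}^{-1}\}$ meets $U$ in an open neighborhood of $x$ in $X$, which by $y_n\to x$ contains $y_n$ for all large $n$; choosing any such $n$ with $n\neq m_0$ produces a point of $U\cap S'$. This ``zig-zag along two perpendicular fibers'' is precisely what the cross topology permits (fibers inherit the topology of the base) but what an analogous argument in the ordinary product topology would not give, and it is exactly what makes this specific choice of $A$ work.

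Putting the two halves together, $A$ is closed in $\fq(Y)$ while $e\in f(\fq(Y))\cap\overline{f(A)}^{\fq(X)}\setminus f(A)$, so $f(A)$ is not closed in the subspace topology inherited from $\fq(X)$. This contradicts the hypothesis that $f$ is an embedding, and so $Y$ must have been sequentially closed in $X$.
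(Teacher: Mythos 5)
Your proposal is correct and follows essentially the same argument as the paper: the paper also takes the set $B=\{y_jy_k^{-1}\mid k>j\}$ in $\fq(Y)_2$, shows it is closed there by checking projection fibers (using Hausdorffness of $X$ to see that the tails $\{y_k\mid k>j\}$ are closed in $Y$), and shows $e$ is a limit point of its image in $\fq(X)_2$ by the same zig-zag along two perpendicular fibers. The only differences are cosmetic: your symmetric index set $n\neq m$ versus the paper's $k>j$, and phrasing the limit-point step via the closure of $S'$ in $X\otimes X^{-1}$ and continuity of ${\bf i_2}$ rather than pulling back a neighborhood of $e$ through ${\bf i}_{\bf 2}^{-1}$.
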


\begin{proof}
Suppose $f:\fq(Y)\to \fq(X)$ is an embedding. Then the restriction $\fq(Y)_2\to \fq(X)_2$ is also an embedding. Suppose to obtain a contradiction that $Y$ is not sequentially closed in $X$. Then there exists a sequence $\{y_j\}_{j\in\bbn}$ of distinct elements in $Y$ such that $\{y_j\}_{j\in\bbn}$ converges to a point $x\in X\backslash Y$. Let $B=\{y_{j}y_{k}^{-1}\in \fq(X)_2\mid k>j\}$ and note $e\notin B$. 

Before proceeding, we note that for all $j\in \bbn$, the set $S_j=\{y_k\mid k>j\}$ is closed in $Y$. Indeed, if $S_j$ has a limit point $y\in Y\backslash S_j$, then we may separate $y$ and $x$ in $X$ by disjoint neighborhoods. In particular, we may separate $y$ and a cofinite subset of $S_j$ with disjoint open sets. It follows that we may separate $y$ and $S_j$ by disjoint open sets in $Y$; a contradiction.

First, we show that the identity $e$ is a limit point of $B$ in $\fq(X)_2$. Let $U$ be an open neighborhood of $e$ in $\fq(X)_2$. Now ${\bf i}_{\bf 2}^{-1}(U)$ is an open neighborhood of $xx^{-1}$ in $M=\coprod_{i=0}^{2}(X\sqcup X^{-1})^{\otimes i}$. Find an open neighborhood $V_1$ of $x$ such that $V_1x^{-1}\subseteq {\bf i}_{\bf 2}^{-1}(U)$. Find $j$ such that $y_j\in V_1$. Then ${\bf i}_{\bf 2}^{-1}(U)$ is an open neighborhood of $y_jx^{-1}$ in $M$ and so we can find an open neighborhood $V_2$ of $x$ in $X$ such that $y_jV_{2}^{-1}\subseteq {\bf i}_{\bf 2}^{-1}(U)$. Find $k>j$ such that $y_k\in V_2$. Now $y_jy_{k}^{-1}\in {\bf i}_{\bf 2}^{-1}(U)$ and we have ${\bf i_2}(y_jy_{k}^{-1})=y_jy_{k}^{-1}\in U$. Thus $B\cap U\neq \emptyset$.

To finish the proof, we will show that $e$ is not a limit point of $B$ in $\fq(Y)_2$ by showing that $B$ is closed in $\fq(Y)_2$. It suffices to show ${\bf i}_{\bf 2}^{-1}(B)$ is closed in $\coprod_{i=0}^{2}(Y\sqcup Y^{-1})^{\otimes i}$. Notice that ${\bf i}_{\bf 2}^{-1}(B)=\{y_jy_{k}^{-1}\in Y\otimes Y^{-1}\mid k>j\}$. For fixed $k$, the set $F_{e,y_{k}^{-1}}\cap {\bf i}_{\bf 2}^{-1}(B)$ is finite and therefore closed. For fixed $j$, we have $F_{y_j,e}\cap {\bf i}_{\bf 2}^{-1}(B)=\{y_jy_{k}^{-1}\mid k>j\}$ which corresponds to the closed set $\{y_k\mid k>j\}$ in $Y$ under the canonical homeomorphism $F_{y_j,e}\cong Y$. Thus $F_{y_j,e}\cap {\bf i}_{\bf 2}^{-1}(B)$ is closed in $F_{y_j,e}$. We conclude that ${\bf i}_{\bf 2}^{-1}(B)$ is closed in $\coprod_{i=0}^{2}(Y\sqcup Y^{-1})^{\otimes i}$.
\end{proof}

By combining Theorem \ref{mainthm} with Proposition \ref{embeddingprop}, we obtain the following characterization.

\begin{theorem}\label{embedding2}
Let $X$ be a sequential Hausdorff space and $Y\subseteq X$. Then the inclusion $Y\to X$ induces an embedding $\fq(Y)\to \fq(X)$ of free quasitopological groups if and only if $Y$ is closed in $X$.
\end{theorem}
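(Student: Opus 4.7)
The plan is to prove Theorem \ref{embedding2} by combining the two results just established: Theorem \ref{mainthm2} for the reverse implication, and Proposition \ref{embeddingprop} for the forward implication, together with the standard fact that sequentially closed subsets of a sequential space are closed.

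For the ``if'' direction, suppose $Y$ is closed in $X$. Since $X$ is Hausdorff (hence $T_1$), Theorem \ref{mainthm2} applies directly and says that the inclusion $Y\to X$ induces a \emph{closed} embedding $\fq(Y)\to\fq(X)$. A closed embedding is in particular an embedding, so this direction is immediate and requires no further work.

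For the ``only if'' direction, suppose the inclusion $Y \to X$ induces an embedding $f:\fq(Y)\to\fq(X)$. Since $X$ is Hausdorff, Proposition \ref{embeddingprop} applies and tells us that $Y$ is sequentially closed in $X$. Now I would invoke the hypothesis that $X$ is sequential: by definition, a sequential space has the property that every sequentially closed subset is closed. Applying this to $Y$ yields that $Y$ is closed in $X$, completing the proof.

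There is no real obstacle here, since all the technical content has been packaged into Theorem \ref{mainthm2} and Proposition \ref{embeddingprop}; the theorem is essentially a corollary obtained by recognizing that the ``sequentially closed'' conclusion of Proposition \ref{embeddingprop} upgrades to ``closed'' precisely under the sequentiality hypothesis. The only subtlety worth mentioning explicitly is that Proposition \ref{embeddingprop} requires only that $X$ be Hausdorff (not sequential), while the strengthening to a genuine characterization requires $X$ to be sequential so that the two closure notions coincide.
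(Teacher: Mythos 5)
Your proposal is correct and is essentially the paper's own argument: the paper states Theorem \ref{embedding2} as an immediate consequence of combining Theorem \ref{mainthm2} (for the ``if'' direction, since a closed embedding is an embedding) with Proposition \ref{embeddingprop} (for the ``only if'' direction, upgraded from ``sequentially closed'' to ``closed'' via the sequentiality of $X$). Nothing is missing.
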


\begin{example}
Since $\omega=\{1,2,3,\dots\}$ is not sequentially closed in its one-point compactification $\omega+1=\omega\cup\{\omega\}$, the discrete group $\fq(\omega)$ does not topologically embed onto its non-discrete image in $\fq(\omega+1)$. Similarly, $\fq((0,1])$ does not embed onto its image in $\fq([0,1])$.
\end{example}

We conclude by mentioning that, in addition to Problem \ref{hausdorffproblem}, there remain many interesting, unanswered questions. Some questions are analogues of well-studied questions about free topological groups, which have been at least partially resolved. For example: When is a quasitopological subgroup $H\leq \fq(X)$ isomorphic to a free quasitopological group? How does the dimension of $X$ (in the sense of $ind$ or $dim$) relate to the dimension of $\fq(X)$? Are there Tychonoff spaces $X,Y$ for which $\fm(X)\cong \fm(Y)$ and $\fq(X)\ncong \fq(Y)$?


\begin{thebibliography}{99}
\expandafter\ifx\csname url\endcsname\relax

\fi
\expandafter\ifx\csname urlprefix\endcsname\relax

\fi

\bibitem{AT08} Arhangel'skii, Tkachenko, \emph{Topological Groups and Related Structures}. Atlantis Studies in Mathematics, 2008.

\bibitem{brazfabelqtop} J. Brazas, P. Fabel, \emph{On fundamental groups with the quotient topology}, J. Homotopy and Related Structures {\bf 10} (2015) 71-91.

\bibitem{Br10.1} J.~Brazas, \emph{The topological fundamental group and free topological groups}, Topology Appl. {\bf 158} (2011) 779-802.

\bibitem{BrazFGasTopGrp} J.~Brazas, \textit{The fundamental group as a topological group}, Topology Appl. {\bf 160} (2013) 170-188.

%\bibitem{Br12NS}
%J.~Brazas, \emph{Open subgroups of free topological groups}, Fundamenta Mathematicae 226 (2014) 17-40.

\bibitem{EN1} A. Elfard, P. Nickolas, \emph{On the topology of free paratopological groups}, Bull. Lond. Math. Soc. {\bf 44} (2012), no. 6, 1103-1115.

\bibitem{EN2} A. Elfard, P. Nickolas, \emph{On the topology of free paratopological groups II}, Topology Appl. {\bf 160} (2013) 220-229.

\bibitem{ellis} R. Ellis, \emph{Locally compact transformation groups}, Duke Math. J. {\bf 24} (1957), 119-125.

%\bibitem{Fab10HE} P.~Fabel, \textit{Multiplication is discontinuous in the Hawaiian earring group}, Bull. Polish Acad. Sci. Math. {\bf 59} (2011) 77--83

%\bibitem{Fab11CG} P.~Fabel, \textit{Compactly generated quasitopological homotopy groups with discontinuous multiplication}, Topology Proc. {\bf 40} (2012) 303--309.

\bibitem{FOT} T.~Fay, E.~Ordman, B.V.S.~Thomas, \emph{The free topological group over the rationals}. Gen. Topol. Appl. {\bf 10} (1979), no. 1, 33-47.

%\bibitem{Graev} Graev, M.I. \emph{Free topological groups}. Amer. Math. Soc. Transl. {\bf 8} (1962) 305-365.

\bibitem{HardyMorrisThompson} J.P.L. Hardy, S.A. Morris, H.B. Thompson, \textit{Applications of the Stone-\v{C}ech compactification to free topological groups}, Proc. Amer. Math. Soc. {\bf 55} (1976), no. 1, 160-164.

\bibitem{HW} M. Henriksen, R.G. Woods, \emph{Separate versus joint continuity: A tale of four topologies}, Topology Appl. {\bf 97} (1999) 175-205.

%\bibitem{HuntMorris} D.C.~Hunt, S.A.~Morris, \emph{Free subgroups of free topological groups}, Proc. Second Internat. Conf. Theory of Groups, Canberra, Lecture Notes in Mathematics 372 (Springer, Berlin, 1974), 377-387.

%\bibitem{KMN} E.~Katz, S.A.~Morris, P.~Nickolas, \emph{Free subgroups of free abelian topological groups}, Math. Proc. Camb. Phil. Soc. {\bf 100} (1986), 347–-353.

%\bibitem{MacLane} S.~Mac Lane, \emph{Categories for the Working Mathematician, 2nd ed.}, Graduate Texts in Mathematics. Springer-Verlag. 1998.

\bibitem{MMO} J. Mack, S. A. Morris, and E. T. Ordman, \emph{Free topological groups and the projective dimension of locally compact Abelian groups}, Proc. Amer. Math. Soc. {\bf 40} (1973) 303-308.

\bibitem{Markov} Markov, A.A. \emph{On free topological groups}. Izv. Akad. Nauk. SSSR Ser. Mat. 9 (1945) 3-64 (in Russian); English Transl.: Amer. Math. Soc. Transl. {\bf 30} (1950) 11-88; Reprint: Amer. Math. Soc. Transl. {\bf 8} (1962), no. 1, 195-272.

%\bibitem{MP93} S.A.~Morris, V.G. Pestov, \emph{Open subgroups of free abelian topological groups}, Math. Proc. Camb. Phil. Soc. {\bf 114} (1993) 439--442.

\bibitem{Novak} J. Nov{\'a}k, \emph{Induktion partiell steiger Funktionen}, Math. Ann. {\bf 118} (1942) 449-461.

\bibitem{Novak2} J. Nov{\'a}k, \emph{On some topologies defined by a class of real-valued functions}, General Topology Appl. {\bf 1} (1971) 247-251.

\bibitem{Numela} E.C. Nummela, \emph{Uniform free topological groups and Samuel compactifications}, Topology Appl. {\bf 13} (1982), no. 1, 77-83.

\bibitem{pestovsubgroup} V.G. Pestov, Some properties of free topological groups, Vestnik Moskov. Univ. Mat. Mekh. {\bf 1} (1982) 35-37 (in Russian); English transl.: Moscow Univ. Math. Bull. {\bf 37} (1982), no. 1, 46-49.

%\bibitem{Pestov} Pestov, V.G., \emph{Neighborhoods of identity in free topological groups}. Vestn. Mosk. Univ. Ser. 1 Mat., Mekh. (1985), no. 3, 8--10.

\bibitem{Po91} H.~Porst, \emph{On the existence and structure of free topological groups}, Category Theory at Work (1991) 165-176.

\bibitem{PRpara} N. Pyrch, O. Ravsky, \emph{On free paratopological groups}, Mat. Stud. {\bf 25} (2006) 115-125.

\bibitem{RSTpara} S. Romaguera, M. Sanchis, M. Tkachenko, \emph{Free paratopological groups}, Topology Proc. {\bf 27} (2002), 1-28.

%\bibitem{RS} Reznichenko, E.A., Sipacheva, O.V., \emph{Quotient mappings to words of bounded length in free topological groups}, General Topology. Mappings, Products, and Dimension of Spaces [in Russian], Izd. Mosk. Univ., Moscow (1995), 98--119.

%\bibitem{Samuel} Samuel, P. \emph{On univeral mappings and free topological groups}, Bull. Am. Math. Soc. {\bf 54} (1948) 591--598.

\bibitem{Samuelultra} Samuel, P. \emph{Ultrafilters and compactifications of uniform spaces}, Trans. Amer. Math. Soc. {\bf 64} (1948) 100-132.

\bibitem{Sipachevasubgroups} Sipacheva, O.V., \emph{Free topological groups of spaces and their subspaces}, Topology Appl. {\bf 101} (2000), 181-212.

\bibitem{Sipacheva} Sipacheva, O.V., \emph{The Topology of Free Topological Groups}, J. Math. Sci. {\bf 131} (2005), no. 4, 5765-5838.
%
\bibitem{Thomas} Thomas, B.V.S., \emph{Free topological groups}, General Topology and its Appl. {\bf 4} (1974) 51-72.

\bibitem{Tkachenko} M. G. Tkachenko, \emph{Strict collectionwise normality and countable compactness in free topological groups}, Sib. Mat. Zh. {\bf 28} (1987), no. 5, 167-177.

\bibitem{Uspenskii} V.V. Uspenski., \emph{Subgroups of free topological groups}, Dokl. Akad. Nauk SSSR {\bf 285} (1985), no. 5, 1070-1072 (in Russian); English transl.: Soviet Math. Dokl. {\bf 32} (1985), no. 3, 847-849.



\end{thebibliography}
\end{document}